\newtheorem{theorem}{Theorem}[section] 
\newtheorem{lemma}{Lemma}[section] 
\newtheorem{example}{Example}[section] 
\newtheorem{corollary}{Corollary}[section]
\date{}
\title{Some non-existence and asymptotic existence results for weighing matrices}
\author{
Ebrahim Ghaderpour $^{1}$ \\[0.3cm]
{\sl Lassonde School of  Engineering}\\
{\sl York University, Canada} }
\begin{document}\sloppy

\maketitle
\footnotetext[1]{E-mail: {\tt
ebig2@yorku.ca.}}

\begin{abstract}

Orthogonal designs and weighing matrices have many applications in areas such as coding theory, cryptography, wireless networking and communication. In this paper, we first show that if positive integer $k$ cannot be written as the sum of three integer squares, then there does not exist any skew-symmetric weighing matrix of order $4n$ and weight $k$, where $n$ is an odd positive integer. Then we show that for any square $k$, there is an integer $N(k)$ such that for each $n\ge N(k)$, there is a symmetric weighing matrix of order $n$ and weight $k$. Moreover, we improve some of the asymptotic existence results for weighing matrices obtained by Eades, Geramita and Seberry.

\end{abstract}

\noindent{\bf Keywords:} Asymptotic existence; Orthogonal design; Skew-symmetric weighing matrix; Symmetric weighing matrix; Weighing matrix. 
\section{Introduction}

An {\it orthogonal design} (OD) \cite[Chapter 1]{GS} of order $n$ and type $(s_1, \ldots, s_{\ell})$, denoted $OD(n; \ s_1, \ldots, s_{\ell})$, is a square matrix $X$ of order $n$ with entries from $\{0, \pm x_1, \ldots, \pm x_{\ell}\},$ where the $x_j$'s are commuting variables,  that satisfies
$$ XX^{\rm T}= \Bigg(\sum_{j=1}^{\ell} s_j x_j^2 \Bigg)I_n,$$ where $X^{\rm T}$ is the transpose of $X$, and $I_n$ is the identity matrix of order $n$.  An OD with no zero entry is called a full OD.
Equating all variables to $1$ in any full OD results in a Hadamard matrix.   
Equating all variables to $1$ in any OD of order $n$ results a weighing matrix, denoted $W(n,k)$, where $k$ is the weight that is the number of nonzero entries in each row (column) of the weighing matrix. 

 The {\it Kronecker product} of two matrices $A = [a_{ij}]$ and $B $ of orders  
 $m\times n$ and $r\times s,$ respectively, is denoted by $A \otimes B$, and it is the matrix of
order $mr \times ns$ defined by 
$$
 A \otimes B = \begin{bmatrix}
                a_{11}B & a_{12}B & \cdots & a_{1n}B\\
                a_{21}B & a_{22}B & \cdots & a_{2n}B\\
                \vdots  &  \vdots       &        & \vdots\\
                a_{m1}B & a_{m2}B & \cdots & a_{mn}B\\
               \end{bmatrix}\!.
 $$ 
The {\it direct sum} of $A$ and $B$ is denoted by $A\oplus B$, and it is the matrix of
order $(m+r) \times (n+s)$ which is defined as follows
$$
 A \oplus B = \begin{bmatrix}
                A & 0 \\
                0 & B \\
               \end{bmatrix}\!,
 $$ 
where $0$ represents a zero matrix of appropriate dimension. 

Let $A=(a_1, \ldots, a_{n})$ and $D=(d_1, \ldots, d_{n})$, where $a_i$'s and $b_i$'s belong to a commutative ring (e.g., $\mathbb{R}$). Square matrix $C=[c_{ij}]$ of order $n$ is called {\it circulant} if $c_{ij}=a_{j-i+1}$, where $j-i$ is reduced modulo $n$. Square matrix $B = [b_{ij}]$ of order $n$ is called {\it back-circulant} if $b_{ij} = d_{i+j-1}$, where $i+ j-2$ is reduced modulo $n$. We have $BC^{\rm T}=CB^{\rm T}$ and $B=B^{\rm T},$ i.e., any back-circulant matrix is symmetric. Let $R=[r_{ij}]$ be a square matrix of order $n$, where $r_{ij}=1$ if $i+j=1$ modulo $n$ and $r_{ij}=0$ otherwise. Matrix $R$ is called {\it back-diagonal} matrix. It is not hard to see that matrix $CR$ is back-circulant and so symmetric (cf., \cite[Chapter 4]{GS}). 

A {\it rational family} of order $n$ and type $(s_1 , \ldots , s_k),$ where the $s_i$'s are positive rational numbers, is a collection of $k$ rational matrices of order $n$, $A_1, \ldots, A_k,$ that satisfy
$A_iA_i^{\rm T}=s_iI_n$ ($1\leq i \leq k$) and $A_iA_j^{\rm T}=-A_jA_i^{\rm T}$, ($1\leq i\neq j \leq k$). A square matrix $A$ is skew-symmetric if $A^{\rm T}=-A$. Two matrices of the same dimension are {\it disjoint} if their entrywise multiplication is a zero matrix \cite[Chapters 1, 2]{GS}.

Eades and Seberry \cite[Chapter 7]{GS} showed some existence results for weighing
matrices. They showed that when the order of ODs and weighing matrices are {\it much larger} than the number of nonzero entries in each row, the necessary conditions for existence of ODs and weighing matrices are also sufficient. 
In this paper, we show some non-existence results on weighing matrices and some asymptotic  results for existence of weighing matrices.

\section{Non-existence results for weighing matrices}
In this section, we show some non-existence results for weighing matrices. The results are summarized in Theorems  \ref{nonodd}, \ref{nonskew} (known) and Theorem \ref{skewnon}.
\begin{lemma}{\rm (e.g., \cite[Chapter 2]{Ghaderpour}).}\label{realeigen}
The eigenvalues of a symmetric matrix with real entries are real.
\end{lemma}
\begin{lemma}{\rm (e.g., \cite[Chapter 2]{Ghaderpour}).}\label{ceigen}
 The eigenvalues of a skew-symmetric matrix with real entries are of the form $\pm ib,$ where b is a real number.
\end{lemma}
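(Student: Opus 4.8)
The plan is to deduce the result from Lemma~\ref{realeigen} by passing to the square of the matrix. Let $A$ be a real skew-symmetric matrix, so that $A^{\rm T}=-A$. First I would observe that $A^2$ is real and symmetric, since $(A^2)^{\rm T}=(A^{\rm T})^2=(-A)^2=A^2$. Hence, by Lemma~\ref{realeigen}, every eigenvalue of $A^2$ is a real number.

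Next I would show that $A^2$ is in fact negative semidefinite. For any real column vector $x$ we have $x^{\rm T}A^2 x=-x^{\rm T}A^{\rm T}Ax=-(Ax)^{\rm T}(Ax)=-\|Ax\|^2\le 0$, so all the (real) eigenvalues of $A^2$ are nonpositive.

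Finally I would connect the eigenvalues of $A$ to those of $A^2$. If $\lambda\in\mathbb{C}$ is an eigenvalue of $A$ with eigenvector $v\neq 0$, then $A^2 v=\lambda^2 v$, so $\lambda^2$ is an eigenvalue of $A^2$; by the previous two steps $\lambda^2$ is real and $\lambda^2\le 0$. Writing $\lambda=a+ib$ with $a,b\in\mathbb{R}$, the requirement that $\lambda^2=a^2-b^2+2abi$ be real and nonpositive forces $a=0$, whence $\lambda=ib$ is purely imaginary. Since $A$ is real, its nonreal eigenvalues occur in conjugate pairs, so alongside $\lambda=ib$ the value $-ib$ is also an eigenvalue, giving the stated form $\pm ib$.

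The argument is short, and the only point requiring care is the transition from an eigenvalue $\lambda$ of $A$ (which may a priori be complex) to the eigenvalue $\lambda^2$ of the real symmetric matrix $A^2$, so that Lemma~\ref{realeigen} genuinely applies; this is where I would be most attentive. An alternative that avoids Lemma~\ref{realeigen} entirely is the direct computation with the conjugate transpose: from $Av=\lambda v$ one obtains $\overline{v}^{\rm T}Av=\lambda\,\overline{v}^{\rm T}v$, and taking conjugate transposes together with $A^{\rm T}=-A$ and $\overline{A}=A$ shows that $\overline{v}^{\rm T}Av$ is purely imaginary while $\overline{v}^{\rm T}v>0$, again forcing $\lambda$ to be purely imaginary.
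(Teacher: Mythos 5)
Your argument is correct. Note, though, that the paper itself offers no proof of this lemma --- it is stated as a known fact with a citation to \cite[Chapter 2]{Ghaderpour} --- so there is no in-paper argument to compare against. Your main route (pass to $A^2$, which is real symmetric and negative semidefinite, so its eigenvalues are real and nonpositive, forcing $\lambda^2\le 0$ and hence $\lambda$ purely imaginary) is sound; the one point you rightly flag, that a complex eigenvalue $\lambda$ of $A$ yields the eigenvalue $\lambda^2$ of $A^2$ to which Lemma~\ref{realeigen} and the quadratic-form bound both apply, is handled correctly, and the conjugate-pair remark properly accounts for the ``$\pm$'' in the statement. The alternative you sketch at the end --- computing $\overline{v}^{\rm T}Av=\lambda\,\overline{v}^{\rm T}v$ and observing that this scalar is purely imaginary because $A^{*}=A^{\rm T}=-A$ --- is the more standard one-line proof and avoids any reliance on Lemma~\ref{realeigen}; either version would serve.
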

\begin{lemma}[e.g., \cite{GS}]\label{abseigen}
The absolute values of the eigenvalues of a weighing matrix $W(n,k)$ are $\sqrt{k}.$
\end{lemma}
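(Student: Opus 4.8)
The plan is to exploit the defining relation of a weighing matrix directly, namely that $W=W(n,k)$ is a real $\{0,\pm 1\}$-matrix satisfying $WW^{\rm T}=kI_n$. Since a weighing matrix need not be symmetric, Lemma~\ref{realeigen} does not apply and the eigenvalues may be genuinely complex; I would therefore work over $\mathbb{C}$ with the Hermitian inner product rather than attempting a real argument.

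First I would record that $W$ is invertible: because $k\geq 1$, the relation $WW^{\rm T}=kI_n$ shows $W^{-1}=k^{-1}W^{\rm T}$, and multiplying this on the left by $W$ then yields the companion identity $W^{\rm T}W=kI_n$. Equivalently, $k^{-1/2}W$ is a real orthogonal matrix, which is the conceptual heart of the argument: the absolute values of the eigenvalues of an orthogonal matrix are all $1$.

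Next, let $\lambda\in\mathbb{C}$ be any eigenvalue of $W$ with eigenvector $v\neq 0$, so that $Wv=\lambda v$. Taking the conjugate transpose and using that $W$ is real (hence $W^{*}=W^{\rm T}$) gives $v^{*}W^{\rm T}=\bar\lambda\,v^{*}$. I would then evaluate the scalar $v^{*}W^{\rm T}Wv$ in two ways: on one hand $W^{\rm T}W=kI_n$ makes it equal to $k\,v^{*}v$, while on the other hand substituting the two eigenvalue relations gives $\bar\lambda\lambda\,v^{*}v=|\lambda|^{2}v^{*}v$. Since $v\neq 0$ forces $v^{*}v>0$, I can cancel this positive factor to obtain $|\lambda|^{2}=k$, that is, $|\lambda|=\sqrt{k}$.

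The argument is short, and the only real subtlety—the main point to get right—is insisting on the complex/Hermitian setting (so that the computation is valid for non-symmetric $W$ with complex eigenvalues) and passing from $WW^{\rm T}=kI_n$ to $W^{\rm T}W=kI_n$ via invertibility. Once those two observations are in place, the two-way evaluation of $v^{*}W^{\rm T}Wv$ closes the proof immediately.
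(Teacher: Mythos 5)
Your argument is correct: passing from $WW^{\rm T}=kI_n$ to $W^{\rm T}W=kI_n$ via invertibility and then evaluating $v^{*}W^{\rm T}Wv=\|Wv\|^{2}$ in two ways is exactly the standard proof, and working over $\mathbb{C}$ is indeed necessary since $W$ need not be normal in general. The paper itself gives no proof of this lemma (it is simply quoted from Geramita--Seberry), so there is nothing to contrast with; your write-up would serve as a perfectly adequate proof, the only cosmetic caveat being the implicit assumption $k\ge 1$, which is harmless since the $k=0$ case is the zero matrix and trivial.
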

\begin{theorem}[e.g., \cite{Craigenthesis}]\label{nonodd}
 There does not exist any symmetric weighing matrix with zero diagonal of odd order.
\end{theorem}
\begin{proof}
 Suppose that $W=W(n,k), n$ odd, is a symmetric weighing matrix with zero diagonal. 
From Linear Algebra, ${\rm tr}(W)=\sum_{t=1}^n\lambda_t,$ where $\lambda_t$'s are eigenvalues of $W.$  
By Lemma \ref{realeigen} and \ref{abseigen}, since $\lambda_t=\pm \sqrt{k},$
 $${\rm tr}(W)=\sum_{t=1}^n \lambda_t=c\sqrt{k}.$$
Since $n$ is odd, $c$ must be odd and therefore nonzero, but, by assumption, ${\rm tr}(W)=0$, which is a contradiction. 

\end{proof} 
\begin{theorem}[e.g., \cite{Craigenthesis}]\label{nonskew}
 There is no skew-symmetric weighing matrix of odd order.
\end{theorem}
\begin{proof}
 Assume that $W=W(n,k)$ is a skew-symmetric weighing matrix of odd order. From Lemma \ref{ceigen} and \ref{abseigen}, eigenvalues of $W$ are in form $\pm i\sqrt{k}.$ Therefore,
 $${\rm tr}(W)=\sum_{t=1}^n \lambda_t=ci\sqrt{k}.$$
Since $n$ is odd, $c$ must be odd and so nonzero, but since $W$ is skew-symmetric, ${\rm tr}(W)=0$ which is a contradiction.
\end{proof}

Next, we show that if $n$ is any odd number and $k$ cannot be written as the sum of three integer squares, then there is no skew-symmetric weighing matrix $W(4n,k)$. To do so, we first bring the following well-known results.  
\begin{lemma}[e.g., \cite{Serre}]\label{Gauss}
 A positive integer can be written as the sum of three integer squares if and only if it is not of the form $4^{\ell}(8k+7),$ where $\ell,k\geq 0.$
\end{lemma}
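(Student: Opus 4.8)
The plan is to prove the two directions separately, since they are of very different character. For the \emph{necessity} direction I would argue by congruences: the only squares modulo $8$ are $0,1,4$, so a sum of three squares can realize every residue modulo $8$ except $7$; hence no integer $\equiv 7 \pmod 8$ is a sum of three squares. To handle the factor $4^{\ell}$, I would observe that if $x^2+y^2+z^2 \equiv 0 \pmod 4$ then (since squares are $0$ or $1 \pmod 4$) each of $x,y,z$ must be even, so $x^2+y^2+z^2 = 4\big((x/2)^2+(y/2)^2+(z/2)^2\big)$ is again a sum of three squares after dividing by $4$. An immediate descent on $\ell$ then shows that if $4^{\ell}(8k+7)$ were a sum of three squares, so would be $8k+7$, contradicting the residue computation. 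This settles that no integer of the forbidden shape is representable.

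For the harder \emph{sufficiency} direction I would pass to the local--global machinery for quadratic forms. First I would invoke the Davenport--Cassels lemma, which for the specific form $q(x,y,z)=x^2+y^2+z^2$ guarantees that an integer representable as a sum of three \emph{rational} squares is already a sum of three \emph{integer} squares; this reduction is valid because every point of $\mathbb{Q}^3$ lies within Euclidean distance strictly less than $1$ of a lattice point of $\mathbb{Z}^3$. It therefore suffices to decide rational representability, and here the Hasse--Minkowski theorem applies: $n$ is a sum of three rational squares if and only if the quaternary form $x^2+y^2+z^2-n\,t^2$ is isotropic over every completion $\mathbb{Q}_v$ of $\mathbb{Q}$, i.e. over $\mathbb{R}$ and over each $\mathbb{Q}_p$.

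The remaining work is the place-by-place computation, phrased through Hilbert symbols. Over $\mathbb{R}$ the condition is simply $n>0$. For each odd prime $p$ the form is automatically isotropic, since a nondegenerate ternary $p$-adic form with $p$ odd represents every class; so no odd prime imposes a constraint. The entire obstruction is concentrated at $p=2$, where a direct Hilbert-symbol evaluation shows that $x^2+y^2+z^2$ fails to represent $n$ over $\mathbb{Q}_2$ precisely when $n=4^{\ell}(8k+7)$. To convert the local solvability into an actual rational solution (and to verify the consistency of the local conditions via Hilbert reciprocity) I would, following Serre, use Dirichlet's theorem on primes in arithmetic progressions to produce an auxiliary prime making a chosen Hilbert symbol behave as required.

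I expect the main obstacle to be the $2$-adic analysis together with the passage from local to global. The congruence necessity is routine, and the odd-prime places are vacuous, but pinning down the exact exceptional family $4^{\ell}(8k+7)$ requires a careful Hilbert-symbol computation over $\mathbb{Q}_2$, and the use of Dirichlet's theorem to realize the globally consistent solution is the genuinely nontrivial input. For this reason I would simply cite the classical treatment (e.g., \cite{Serre}) rather than reproduce the full local computation here.
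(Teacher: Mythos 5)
The paper offers no proof of this lemma at all --- it is quoted as a classical result with a pointer to Serre --- and your treatment is consistent with that: your congruence-and-descent argument for the necessity direction is complete and correct, and your sketch of sufficiency (Davenport--Cassels to pass from rational to integral representability, then Hasse--Minkowski with the only obstructions at $2$ and $\infty$, realized globally via Dirichlet's theorem) is an accurate account of the argument in the cited source. Since you, like the paper, ultimately defer the hard local--global computation to the reference, nothing further is required here.
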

The following Lemma is a useful result that can be concluded from Lemma \ref{Gauss}, and for the sake of completion, we bring its proof.
\begin{lemma}[e.g., \cite{Serre}]\label{sumthree}
A positive integer is the sum of three rational squares if and only if it is the sum of three integer squares.
\end{lemma}
\begin{proof}
 Suppose that a positive integer $n$ is the sum of three rational squares. Reducing the three rational numbers to the same denominator,
one may write $$m^2n=\alpha^2+\beta^2+\gamma^2,$$ where $\alpha,$ $\beta$ and $\gamma$ are integers. Suppose that $n$ cannot be written as the sum of three integer squares. 
From Lemma \ref{Gauss}, there exist nonnegative integers $k,\ell$ such that  $n=4^{\ell}(8k+7).$ One may write $m$ as $2^r(2s+1),$ for some nonnegative integers $r,s.$ Thus, $m^2=4^r\big(4(s^2+s)+1\big)=4^r(8b+1)$, where $b=(s^2+s)/2$ is a nonnegative integer, and so
$$m^2n=4^{r+\ell}(8k+7)(8b+1)=4^{r+\ell}(8c+7),$$ 
where $c=8kb+k+7b$. This is a contradiction because by Lemma \ref{Gauss}, $m^2n$ cannot be written as the sum of three integer squares, whereas by assumption 
$m^2n=\alpha^2+\beta^2+\gamma^2.$  
Therefore, the result follows.
\end{proof}

\begin{lemma}[Shapiro \cite{Shapiro}]\label{red}
 There is a rational family in order $n=2^mt, \ t$ odd, of type $(s_1, \ldots, s_k)$ if and only if 
there is a rational family of the same type in order $2^m.$ 
\end{lemma}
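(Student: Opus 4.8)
The plan is to prove the two implications separately, with the reverse implication (building the larger family from the smaller one) being routine and the forward implication (descending from order $2^mt$ to order $2^m$) carrying all the content.

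For the easy implication, suppose $A_1,\ldots,A_k$ is a rational family of order $2^m$ and type $(s_1,\ldots,s_k)$. I would form the matrices $A_i\otimes I_t$ and use $(A_i\otimes I_t)(A_j\otimes I_t)^{\rm T}=(A_iA_j^{\rm T})\otimes I_t$, so that $A_iA_i^{\rm T}=s_iI$ yields $(A_i\otimes I_t)(A_i\otimes I_t)^{\rm T}=s_iI_{2^mt}$ while $A_iA_j^{\rm T}=-A_jA_i^{\rm T}$ is inherited verbatim. Thus $\{A_i\otimes I_t\}$ is a rational family of order $2^mt$ of the same type. Note that this direction uses neither that the first factor is a power of two nor that $t$ is odd.

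For the hard implication I would first strip the transposes out of the defining relations. From $A_iA_i^{\rm T}=s_iI$ one gets $A_i^{-1}=s_i^{-1}A_i^{\rm T}$, and after left/right multiplying $A_iA_j^{\rm T}=-A_jA_i^{\rm T}$ by suitable factors one also obtains $A_i^{\rm T}A_j=-A_j^{\rm T}A_i$ for $i\neq j$. Setting $V_j:=A_1^{-1}A_j$ for $2\le j\le k$, these identities show that each $V_j$ is a rational \emph{skew-symmetric} matrix with $V_j^2=-(s_j/s_1)I$ and $V_jV_\ell=-V_\ell V_j$ for $j\neq\ell$; conversely, given any such anticommuting skew family together with any $A_1$ satisfying $A_1A_1^{\rm T}=s_1I$, the matrices $A_j:=A_1V_j$ recover a rational family of type $(s_1,\ldots,s_k)$. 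Hence the whole problem reduces to descending, from order $2^mt$ to order $2^m$, two independent pieces of data: the anticommuting skew system $\{V_j\}$ and the single matrix $A_1$. The skew system makes $\mathbb{Q}^{2^mt}$ a module over the semisimple generalized Clifford $\mathbb{Q}$-algebra $\mathcal{C}=\langle V_2,\ldots,V_k\rangle$ (a tensor product of quaternion algebras up to a central quadratic extension), and the standard symmetric form is invariant in the sense $\langle V_jx,y\rangle=-\langle x,V_jy\rangle$. Every $\mathcal{C}$-module is a direct sum of simple modules whose $\mathbb{Q}$-dimension is a power of two, and since that power divides $2^mt$ with $t$ odd, it already divides $2^m$; extracting a form-orthogonal submodule of dimension $2^m$ then yields skew, pairwise-anticommuting $V_j'$ of order $2^m$ with the same squares $-(s_j/s_1)I$. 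Independently, the existence of $A_1$ is the isometry $\langle1\rangle^{2^mt}\cong\langle s_1\rangle^{2^mt}$ of rational quadratic forms, and by Pfister's theory of multiplicative ($2^m$-dimensional) forms this isometry is insensitive to the odd factor $t$, so $A_1'$ of order $2^m$ exists as well. Recombining via $A_j'=A_1'V_j'$ produces the family of order $2^m$.

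The main obstacle is the step I glossed over: the module decomposition must be carried out in the category of modules equipped with the invariant symmetric form, so that the extracted generators $V_j'$ remain genuinely skew-symmetric and retain the \emph{exact} squares $-(s_j/s_1)I$, rather than being merely abstractly $\mathcal{C}$-isomorphic to a summand of the original module. This is a Witt-cancellation (hermitian-form) argument over the algebra $\mathcal{C}$ with its transpose involution, and together with the parallel Pfister statement for $A_1$ it is exactly the place where the hypothesis that the complementary factor $t$ is odd --- equivalently, that all relevant module and form dimensions are powers of two --- is genuinely used.
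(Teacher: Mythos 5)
First, a point of reference: the paper does not prove Lemma~\ref{red} at all --- it is quoted as a known theorem from Shapiro's thesis \cite{Shapiro} --- so there is no in-paper argument to compare yours against. Judged on its own terms, your proposal gets the framing right. The easy direction ($A_i\mapsto A_i\otimes I_t$) is correct; the normalization $V_j=A_1^{-1}A_j$ really does yield skew-symmetric, pairwise anticommuting rational matrices with $V_j^2=-(s_j/s_1)I$; and the recombination $A_j'=A_1'V_j'$ checks out. This is, in outline, the quadratic-forms/Clifford-algebras route that Shapiro himself takes.

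However, the proposal is not a proof, because the one step carrying all of the content is only named, not carried out --- and you say so yourself. Two things are needed and neither is established: (i) that among the $2^m$-dimensional nondegenerate $\mathcal{C}$-submodules of $\big(\mathbb{Q}^{2^mt},\langle\cdot,\cdot\rangle\big)$ there is one on which the restricted form is isometric to $\langle1\rangle^{2^m}$; without this, the extracted $V_j'$ are merely skew-adjoint for \emph{some} positive-definite rational form of rank $2^m$, not skew-symmetric matrices in the standard basis, and the definition of a rational family is not met; and (ii) that $2^mt\cdot\langle1\rangle\cong 2^mt\cdot\langle s_1\rangle$ forces $2^m\cdot\langle1\rangle\cong 2^m\cdot\langle s_1\rangle$. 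Both statements are true, and hermitian Witt cancellation over $\mathcal{C}$ with the transpose involution, together with the fact that the torsion of the Witt ring of $\mathbb{Q}$ is $2$-primary (so an odd multiple can be cancelled), are indeed the right tools --- but invoking their names is precisely where the difficulty of Shapiro's theorem lives, so the argument as written has a genuine gap at its crux. Two smaller points: your dimension count (``the simple-module dimension divides $2^mt$, hence divides $2^m$'') silently uses that all simple $\mathcal{C}$-modules have the same $\mathbb{Q}$-dimension, which holds for Clifford algebras of nondegenerate forms but should be stated; and the case $k=1$ (no $V_j$'s at all) reduces entirely to item (ii), which makes clear that the quadratic-form descent cannot be avoided.
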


\begin{lemma}[Geramita-Wallis \cite{GS}]\label{rat}
 A necessary and sufficient condition that there be a rational family of type $[1, k]$ in order $4$ is that $k$ be a sum of 
three rational squares.
\end{lemma}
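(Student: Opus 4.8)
The plan is to reduce the existence of the rational family to that of a single rational skew-symmetric matrix of order $4$, and then to read the sum-of-three-squares condition directly off the diagonal of its square. A rational family of type $[1,k]$ in order $4$ consists of rational matrices $A_1,A_2$ with $A_1A_1^{\rm T}=I_4$, $A_2A_2^{\rm T}=kI_4$, and $A_1A_2^{\rm T}=-A_2A_1^{\rm T}$. Since $A_1$ is rational and orthogonal, $A_1^{-1}=A_1^{\rm T}$, so I would set $B=A_1^{\rm T}A_2$. Then $BB^{\rm T}=A_1^{\rm T}A_2A_2^{\rm T}A_1=kI_4$, and conjugating the anticommuting relation by $A_1$ (left-multiplying by $A_1^{\rm T}$ and right-multiplying by $A_1$) gives $A_2^{\rm T}A_1=-A_1^{\rm T}A_2$, that is $B^{\rm T}=-B$. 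Conversely, the pair $(A_1,A_2)=(I_4,B)$ recovers a rational family from any rational skew-symmetric $B$ with $BB^{\rm T}=kI_4$. Because $B^{\rm T}=-B$, the condition $BB^{\rm T}=kI_4$ is the same as $B^2=-kI_4$, so the lemma becomes: a rational skew-symmetric $B$ of order $4$ with $B^2=-kI_4$ exists if and only if $k$ is a sum of three rational squares.

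For the necessity direction I would simply inspect the $(1,1)$ entry of $B^2$. If the first row of the skew-symmetric $B$ is $(0,p,q,r)$, then skew-symmetry forces its first column to be $(0,-p,-q,-r)^{\rm T}$, so $(B^2)_{11}=-(p^2+q^2+r^2)$. Comparing with $-kI_4$ yields $k=p^2+q^2+r^2$ with $p,q,r$ rational, so $k$ is a sum of three rational squares.

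For the sufficiency direction, given $k=p^2+q^2+r^2$ with $p,q,r\in\mathbb{Q}$, I would take the three standard skew-symmetric ``quaternion'' matrices $E_1,E_2,E_3$ of order $4$ satisfying $E_a^{\rm T}=-E_a$, $E_a^2=-I_4$, and $E_aE_b=-E_bE_a$ for $a\neq b$, and set $B=pE_1+qE_2+rE_3$. Then $B$ is rational and skew-symmetric, and in expanding $B^2$ the cross terms cancel in pairs by anticommutativity while the square terms give $B^2=-(p^2+q^2+r^2)I_4=-kI_4$, as required.

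The only real content lies in the reduction in the first step: one must verify that $B=A_1^{\rm T}A_2$ is genuinely skew-symmetric, not merely that it satisfies $BB^{\rm T}=kI_4$, since it is precisely skew-symmetry that forces the three-square shape of $k$ in the necessity argument. I would therefore isolate the identity $A_2^{\rm T}A_1=-A_1^{\rm T}A_2$ as the key point. Once that reduction is in place, both implications are short, with the anticommuting triple $E_1,E_2,E_3$ carrying the sufficiency direction.
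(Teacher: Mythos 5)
Your proposal is correct and follows essentially the same route as the paper: the paper also passes to $D=A^{\rm T}B$, notes $D=-D^{\rm T}$ and $DD^{\rm T}=kI$ so that the zero diagonal forces $k$ to be a sum of three rational squares, and proves sufficiency with an explicit skew-symmetric matrix that is exactly your $pE_1+qE_2+rE_3$ written out entrywise.
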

\begin{proof}
 Suppose that $\{A,B\}$ is a rational family of type $[1, k]$ in order $4.$ Then \\ $\big\{I=A^{\rm T}A, \ D=A^{\rm T}B\big\}$ is also 
a rational family of the same type and order. Thus $D=-D^{\rm T}$ and $DD^{\rm T}=kI.$ Since $D$ is  skew-symmetric, the diagonal of $D$ is zero, so $k$ is a sum of three rational squares.

Now let $k=a^2+b^2+c^2,$ where $a,b$ and $c$ are rational numbers. If we let
$$D = \left[ \begin {array}{rrrr} 0&a&b&c \\
 -a&0&-c&b\\ -b&c&0&-a\\ -c&-b&a&0\end {array} \right]
,$$
then $\{I,D\}$ is a rational family of type $[1,k]$ and order $4.$
 \end{proof}
We use Lemmas \ref{sumthree}, \ref{red} and \ref{rat} to prove the following nonexistence result.
\begin{theorem}\label{skewnon}
 Suppose that positive integer $k$ cannot be written as the sum of three integer squares. Then there does not exist a skew-symmetric
$W(4n,k),$ for any odd number $n.$
\end{theorem}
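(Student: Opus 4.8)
The plan is to argue by contraposition: assuming a skew-symmetric $W(4n,k)$ exists for some odd $n$, I will chain together Lemmas \ref{sumthree}, \ref{red} and \ref{rat} to conclude that $k$ must be a sum of three integer squares, contradicting the hypothesis. The crucial first step—and the one I expect to carry the main idea—is to turn the skew-symmetric weighing matrix into a rational family of type $[1,k]$.

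First I would observe that a skew-symmetric $W=W(4n,k)$ satisfies $WW^{\rm T}=kI_{4n}$ and $W^{\rm T}=-W$. I claim the pair $\{I_{4n},W\}$ is a rational family of type $[1,k]$ in order $4n$. Indeed, $I_{4n}I_{4n}^{\rm T}=1\cdot I_{4n}$ and $WW^{\rm T}=kI_{4n}$, giving the two norm conditions with $s_1=1$ and $s_2=k$. For the anti-commuting condition, since $W$ is skew-symmetric,
$$I_{4n}W^{\rm T}=W^{\rm T}=-W=-\big(WI_{4n}^{\rm T}\big),$$
which is exactly $A_1A_2^{\rm T}=-A_2A_1^{\rm T}$. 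This is the conceptual heart of the argument; once it is in place, everything else is a mechanical descent through the stated lemmas.

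Next I would invoke Shapiro's reduction (Lemma \ref{red}). Writing $4n=2^2t$ with $t=n$ odd, we are in the case $m=2$, so the existence of a rational family of type $[1,k]$ in order $4n$ is equivalent to the existence of one of the same type in order $2^2=4$. Applying Geramita--Wallis (Lemma \ref{rat}) to this order-$4$ family then forces $k$ to be a sum of three rational squares, and Lemma \ref{sumthree} upgrades this to $k$ being a sum of three integer squares.

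Assembling the chain, the assumed existence of the skew-symmetric $W(4n,k)$ produces an order-$4n$ rational family of type $[1,k]$, hence (by Shapiro) an order-$4$ one, hence (by Geramita--Wallis) a representation of $k$ as a sum of three rational squares, hence (by Lemma \ref{sumthree}) a representation as a sum of three integer squares. This contradicts the hypothesis that $k$ is not such a sum, so no skew-symmetric $W(4n,k)$ can exist. The only genuine obstacle is verifying the rational-family identity at the outset; after that the proof is a direct concatenation of the three cited lemmas.
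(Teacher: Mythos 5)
Your proposal is correct and follows exactly the same route as the paper's own proof: form the rational family $\{I_{4n},W\}$ of type $[1,k]$, reduce to order $4$ via Shapiro's lemma, and conclude via the Geramita--Wallis criterion together with the rational-to-integer upgrade of Lemma \ref{sumthree}. The only difference is that you spell out the verification that $\{I_{4n},W\}$ satisfies the rational-family axioms, which the paper leaves implicit.
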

\begin{proof}
If there is a skew-symmetric $W=W(4n,k)$ for some odd number $n,$ then   $\{I_{4n},W\}$ is a rational family of type $[1,k]$ and order $4n$. Thus, by Lemma \ref{red}, there is a rational family of type $[1,k]$ and order 4. Lemmas \ref{sumthree} and \ref{rat}  imply that $k$ must be the
sum of three integer squares. 
\end{proof}
\section{Asymptotic existence results of weighing matrices}

In this section, we provide some asymptotic results for existence of weighing matrices. These results are summarized in Theorem \ref{sym} and Theorems \ref{two} and \ref{four} (for ODs) that use different methodologies to improve the known results shown by Eades, Geramita and Seberry \cite[Chapter 7]{GS}.

\begin{lemma}[e.g., \cite{GS}]\label{OD}
 A necessary and sufficient condition that there exists an $OD\big(n; \ u_1, \ldots, u_k\big)$ is that there exists a family 
$\{A_1, \ldots, A_k\}$ of pairwise disjoint square matrices of order $n$ with  entries from $\{0, \pm 1\}$ satisfying

$(i)$ \ $A_i \ is \ a\ W(n,u_i),\ \ \ \ \ \  1\leq i \leq k, $

$(ii)$ $A_iA_j^{\rm T}=-A_jA_i^{\rm T}, \ \ \ \ \ \ \ \ 1\leq i\neq j \leq k. $
\end{lemma}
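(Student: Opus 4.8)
The plan is to set up an explicit linear correspondence between an orthogonal design and the family of its variable-support matrices, and then match coefficients of monomials. First I would observe that because the entries of any $OD(n;u_1,\ldots,u_k)$ lie in $\{0,\pm x_1,\ldots,\pm x_k\}$, the design decomposes as $X=\sum_{i=1}^{k}x_iA_i$, where $A_i$ is the $\{0,\pm1\}$ matrix recording the positions and signs at which the variable $x_i$ occurs in $X$. Since each entry of $X$ is a single term, equal to $0$ or to some $\pm x_i$, the supports of the $A_i$ partition the nonzero positions of $X$; hence the $A_i$ are pairwise disjoint in the sense defined above.

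Next I would expand the defining identity. Using $X^{\rm T}=\sum_j x_j A_j^{\rm T}$ and the fact that the $x_i$ commute, one computes
$$XX^{\rm T}=\sum_{i=1}^{k}x_i^2\,A_iA_i^{\rm T}+\sum_{1\le i<j\le k}x_ix_j\bigl(A_iA_j^{\rm T}+A_jA_i^{\rm T}\bigr).$$
The key step is to regard both sides as matrices whose entries are polynomials in the indeterminates $x_1,\ldots,x_k$, so that the matrix identity $XX^{\rm T}=\bigl(\sum_i u_i x_i^2\bigr)I_n$ holds if and only if the coefficient matrices of each monomial agree. Comparing the coefficient of $x_i^2$ yields $A_iA_i^{\rm T}=u_iI_n$, which, since $A_i$ has entries in $\{0,\pm1\}$, is exactly the statement that $A_i$ is a $W(n,u_i)$, establishing $(i)$. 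Comparing the coefficient of $x_ix_j$ for $i\neq j$ yields $A_iA_j^{\rm T}+A_jA_i^{\rm T}=0$, which is $(ii)$.

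For the converse I would run the computation in reverse. Given a pairwise disjoint family $\{A_1,\ldots,A_k\}$ of $\{0,\pm1\}$ matrices of order $n$ satisfying $(i)$ and $(ii)$, I set $X=\sum_i x_i A_i$. Disjointness guarantees that at most one $A_i$ is nonzero in each position, so every entry of $X$ lies in $\{0,\pm x_1,\ldots,\pm x_k\}$; the displayed expansion together with $(i)$ and $(ii)$ then collapses the off-diagonal sum and leaves $XX^{\rm T}=\bigl(\sum_i u_i x_i^2\bigr)I_n$, exhibiting $X$ as the required orthogonal design.

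The calculations are routine; the one point demanding care, and the only place where the structure of the problem is genuinely used, is the passage from the single matrix-polynomial identity to the separate conditions on the monomial coefficients. This step is legitimate precisely because $x_1,\ldots,x_k$ are commuting indeterminates, hence algebraically independent, so two polynomials in them coincide if and only if they agree coefficientwise. The disjointness hypothesis is what makes the map $(A_1,\ldots,A_k)\mapsto\sum_i x_iA_i$ land among admissible design matrices, so I expect the bookkeeping of this correspondence, rather than any deep step, to be the main thing to get right.
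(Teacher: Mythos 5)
The paper does not prove this lemma---it is quoted from Geramita and Seberry \cite{GS}---so there is no in-paper argument to compare against; your proof is the standard one from that source and it is correct. The decomposition $X=\sum_i x_iA_i$, the coefficient comparison justified by the algebraic independence of the commuting variables, and the use of disjointness in the converse are exactly the right points to make, so nothing needs to be added.
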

The following lemma, due to Sylvester, is known, and we bring its proof.
\begin{lemma}[\cite{Sylvester}]\label{ex}
 Let $x$ and $y$ be two relatively prime positive integers. Then every integer $N\geq xy$ can be written in the form $ax+by,$ 
where $a$ and $b$ are nonnegative integers.
\end{lemma}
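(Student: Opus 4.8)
The plan is to exploit the fact that since $\gcd(x,y)=1$, the multiples of $x$ run through every residue class modulo $y$. Concretely, I would first establish that the $y$ integers $0\cdot x,\ 1\cdot x,\ \ldots,\ (y-1)x$ are pairwise incongruent modulo $y$: if $ix\equiv jx\pmod{y}$ with $0\le i,j\le y-1$, then $y\mid (i-j)x$, and because $\gcd(x,y)=1$ this forces $y\mid(i-j)$, hence $i=j$. Thus these $y$ values form a complete residue system modulo $y$.

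Given any integer $N$, I would then pick the unique $a\in\{0,1,\ldots,y-1\}$ with $ax\equiv N\pmod{y}$. By construction $y$ divides $N-ax$, so I may set $b=(N-ax)/y$, an integer, giving $N=ax+by$ with $a\ge 0$ already guaranteed by the choice of $a$.

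It remains to verify that $b$ is nonnegative, and this is the step where the hypothesis $N\ge xy$ enters — the main (though short) point of the argument. Since $a\le y-1$, one has $ax\le (y-1)x = xy-x < xy \le N$, so $N-ax>0$ and therefore $b=(N-ax)/y\ge 0$. This exhibits $N=ax+by$ as a nonnegative integer combination of $x$ and $y$, completing the proof. I do not anticipate any genuine obstacle here; the only care needed is to choose $a$ as the residue representative in the correct range $\{0,\ldots,y-1\}$ so that both the congruence and the size bound line up simultaneously.
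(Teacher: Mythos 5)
Your proposal is correct and follows essentially the same route as the paper: both arguments come down to choosing $a\in\{0,\ldots,y-1\}$ with $ax\equiv N\pmod{y}$ (the paper reaches this normalization by shifting a B\'ezout representation $cx+dy=N$ by multiples of $y$, you reach it via the complete residue system $\{0\cdot x,\ldots,(y-1)x\}$ modulo $y$) and then both use $ax\le(y-1)x<xy\le N$ to conclude $b=(N-ax)/y\ge 0$. The difference is purely presentational.
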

\begin{proof}
 Let $N$ be an integer greater than or equal to $xy.$ Since $x$ and $y$ are relatively prime, there are integers $c$ and $d$ such that $cx+dy=N$ (see \cite{Serre}).
So, $$(c+jy)x+(d-jx)y=N,$$ where $j\in \Bbb{Z}.$ One can choose $j$ such that $0\leq c+jy\leq y-1.$ For such $j,$ we let 
$a=c+jy$ and $b=d-jx.$ The condition $N\geq xy$ implies that $b$ must be positive.  
\end{proof}
The following lemma shows how to construct ODs of higher orders by using two ODs of the same
types but different orders. The first part of the lemma is known (cf., \cite[Lemma 7.22]{GS}).
\begin{lemma} \label{twood}
Suppose that there are $A=OD\big(n_1;\ u_1, \ldots, u_m\big)$ and  $B=OD\big(n_2;\  u_1, \ldots, u_m\big).$ Let $h={\rm gcd}(n_1, n_2).$ Then there 
is an integer $N$ such that for each $t\geq N,$ there is an $OD\big(ht; \ u_1, \ldots, u_m\big).$ Moreover,  if $A$ and $B$ are symmetric, then there is an integer $N$ such that for each $t\geq N,$ there is a symmetric $OD\big(ht; \ u_1, \ldots, u_m\big)$.
\end{lemma}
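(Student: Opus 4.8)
The plan is to build the desired order-$ht$ design as a block-diagonal (direct sum) assembly of copies of $A$ and copies of $B$, reducing the whole problem to a numerical representability question that Lemma \ref{ex} already resolves. First I would handle the divisibility bookkeeping: write $n_1 = h n_1'$ and $n_2 = h n_2'$, so that $\gcd(n_1', n_2') = 1$. Then $ht = a n_1 + b n_2$ for nonnegative integers $a,b$ holds precisely when $t = a n_1' + b n_2'$. Since $n_1'$ and $n_2'$ are relatively prime, Lemma \ref{ex} guarantees that every $t \geq n_1' n_2'$ admits such a representation, so I would take $N = n_1' n_2'$.

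Next, for a fixed $t \geq N$ with $t = a n_1' + b n_2'$, I would form the direct sum
$$M = \underbrace{A \oplus \cdots \oplus A}_{a} \,\oplus\, \underbrace{B \oplus \cdots \oplus B}_{b},$$
a square matrix of order $a n_1 + b n_2 = ht$ whose entries again lie in $\{0, \pm x_1, \ldots, \pm x_m\}$. The verification that $M$ is an $OD\big(ht; u_1, \ldots, u_m\big)$ is the computational core: because $M$ is block diagonal, $MM^{\rm T}$ is block diagonal with diagonal blocks $AA^{\rm T}$ and $BB^{\rm T}$, each equal to $\big(\sum_{j=1}^{m} u_j x_j^2\big)$ times an identity matrix of the appropriate size, whence $MM^{\rm T} = \big(\sum_{j=1}^{m} u_j x_j^2\big) I_{ht}$, as required.

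Finally, for the \emph{moreover} clause I would simply observe that a direct sum of symmetric matrices is symmetric: if $A = A^{\rm T}$ and $B = B^{\rm T}$, then the block-diagonal structure forces $M = M^{\rm T}$, so the same $M$ is a symmetric $OD\big(ht; u_1, \ldots, u_m\big)$ under the identical bound $N$. I do not expect a serious obstacle here; the only points demanding care are the gcd reduction (so that $n_1', n_2'$ are coprime and Lemma \ref{ex} genuinely applies) and confirming that the direct-sum construction preserves both the orthogonality relation and, when present, the symmetry. The essential idea is that direct sums realize additive combinations of orders while leaving the OD type and the symmetry property intact.
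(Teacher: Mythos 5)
Your proposal is correct and follows essentially the same route as the paper: both reduce the order count to Sylvester's representability result (Lemma \ref{ex}) applied to $n_1/h$ and $n_2/h$, and both realize the order-$ht$ design as a direct sum of $a$ copies of $A$ and $b$ copies of $B$ (the paper phrases this as $I_a\otimes A_i\oplus I_b\otimes B_i$ for the coefficient families of Lemma \ref{OD}, which is the same matrix). The symmetry claim is likewise handled identically, by noting that direct sums (and Kronecker products with $I$) preserve symmetry.
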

\begin{proof}
 Let $x=n_1/h$ and $y=n_2/h.$ Then $x$ and $y$ are relatively prime. Let $N=xy,$ and $t$ be a positive integer
 $\geq N.$ By Lemma \ref{ex}, there are nonnegative integers $a$ and $b$ such that $t=ax+by.$ Since there exist $OD\big(n_1;\ u_1, \ldots, u_m\big)$ and  $OD\big(n_2; \ u_1, \ldots, u_m\big),$
there are families $\big\{A_1,\ldots, A_m\big\}$ of order $n_1$ and $\big\{B_1,\ldots, B_m\big\}$ of order $n_2$ satisfying the conditions
in Lemma \ref{OD}. We define the family $$S=\big\{(I_{a}\otimes A_1\oplus I_{b}\otimes B_1), \ldots, (I_{a}\otimes A_m\oplus I_{b}\otimes B_m)\big\}$$  of
order $an_1+bn_2=ht.$ It can be seen that this family satisfies the conditions of Lemma \ref{OD}, therefore it makes an $OD\big(ht; \ u_1, \ldots, u_m\big).$ \\
Now if $A$ and $B$ are symmetric, then $A_i$'s and $B_i$'s, $1\le i\le m$, are all symmetric. Since $$\big((A\otimes B)\oplus (C\otimes D)\big)^{\rm T}=(A^{\rm T}\otimes B^{\rm T})\oplus (C^{\rm T}\otimes D^{\rm T}),$$
set $S$ consists of $m$ symmetric matrices of order $ht$ satisfying the conditions of  Lemma \ref{OD}, and so they generate a symmetric  $OD\big(ht; \ u_1, \ldots, u_m\big).$
\end{proof}

\begin{theorem}[Seberry-Whiteman \cite{White}]\label{cirw}
 Let $q$ be a prime power. Then there is a circulant  $W\big(q^2+q+1, q^2\big).$
\end{theorem}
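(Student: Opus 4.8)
The goal is to construct, for each prime power $q$, a circulant weighing matrix $W(q^2+q+1, q^2)$. The natural combinatorial object to target is a $(q^2+q+1, q^2, \text{something})$-type incidence structure, and indeed the plan is to build the matrix from a \emph{perfect difference set} in the cyclic group $\mathbb{Z}_{q^2+q+1}$. Recall the classical Singer construction: the projective plane $PG(2,q)$ has $v=q^2+q+1$ points and the same number of lines, each line containing $q+1$ points, and its automorphism group contains a cyclic Singer subgroup of order $v$ acting regularly on points. This yields a planar difference set $D\subseteq \mathbb{Z}_v$ of size $q+1$ with the property that every nonzero residue modulo $v$ is represented exactly once as a difference $d_i-d_j$ of distinct elements of $D$.

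First I would record the key counting identity for this difference set: if $M$ is the $v\times v$ circulant $(0,1)$-matrix whose first row has $1$'s exactly in the positions indexed by $D$, then $MM^{\rm T}=(q+1)I_v + \lambda(J_v-I_v)$ with $\lambda=1$, i.e. $MM^{\rm T}=qI_v+J_v$, where $J_v$ is the all-ones matrix. This is just the restatement that each point lies on $q+1$ lines and any two distinct points lie on exactly one common line. The matrix $M$ has row sums $q+1$ and is circulant, so it is exactly the incidence matrix of the cyclic plane.

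The second idea is to convert $M$ into a \emph{signed} circulant with the right orthogonality. The trick is to replace the complementary support so as to cancel the $J_v$ term. Concretely, I would form $W$ by taking the circulant matrix whose first row agrees with $M$ on a suitable support but whose entries are chosen from $\{0,\pm1\}$ so that the row weight becomes $q^2$ and $WW^{\rm T}=q^2 I_v$. One clean way: consider $N=2M-J_v$, the circulant $\pm1$ matrix (the $(0,1)\to(\pm1)$ transform of $M$). Then $NN^{\rm T}=4MM^{\rm T}-2MJ_v-2J_vM^{\rm T}+J_vJ_v^{\rm T}$; using $MJ_v=J_vM^{\rm T}=(q+1)J_v$ and $J_vJ_v=vJ_v$ together with $MM^{\rm T}=qI_v+J_v$, the $J_v$ terms collapse and leave a multiple of the identity. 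I expect the resulting $\pm1$ matrix to be off by the wrong weight, so the actual construction keeps $W$ at weight $q^2$ by deleting the single "overlap" position in each cyclic shift; the arithmetic $\binom{q+1}{2}\cdot 2 = q^2+q$ together with the diagonal bookkeeping is what forces the weight down to $q^2$ and the cross terms to vanish.

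The main obstacle, and the step I would treat most carefully, is pinning down exactly which signed circulant of weight $q^2$ satisfies $WW^{\rm T}=q^2I_v$: naively symmetrizing the incidence matrix gives the correct \emph{inner products up to the all-ones correction}, but one must verify that the off-diagonal entries of $WW^{\rm T}$ genuinely cancel for \emph{every} nonzero shift, not merely on average. Because $M$ is circulant, $WW^{\rm T}$ is also circulant, so it suffices to check a single row of inner products — this reduces the whole verification to one difference-set counting computation using the $\lambda=1$ property, which is the technical heart of the argument. I would finish by confirming that $W$ is circulant by construction (circulants are closed under the linear operations used) and that its weight is $q^2$, completing the proof that $W=W(q^2+q+1,q^2)$ is a circulant weighing matrix. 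I expect the cleanest route is to cite the Singer difference set and then perform the single-row orthogonality check, rather than manipulating full matrix products.
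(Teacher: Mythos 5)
A preliminary remark: the paper does not prove Theorem~\ref{cirw} at all — it is quoted from Seberry and Whiteman \cite{White} and used as a black box (only the subsequent corollary about $W\big(c(q^2+q+1),q^2\big)$ is proved). So your argument has to stand on its own, and as written it does not.

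The genuine gap is that you never specify the sign pattern, and the sign pattern is the entire content of the theorem. What you do establish is the forced and easy part: since $v-q^2=q+1$, the $q+1$ zeros of the first row should sit on a Singer planar difference set $D$, and the incidence circulant satisfies $MM^{\rm T}=qI_v+J_v$. From there the argument breaks. Your candidate $N=2M-J_v$ does not work: the computation you set up gives $NN^{\rm T}=4qI_v+(q^2-3q+1)J_v$, and $q^2-3q+1\neq 0$ for every integer $q$, so the $J_v$ terms do \emph{not} collapse as claimed. More importantly, once the zero set is fixed to be $D$, one must still decide which of the $q^2$ remaining positions carry $+1$ and which carry $-1$ so that every nonzero autocorrelation vanishes. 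The $\lambda=1$ property of $D$ constrains only unsigned incidence counts and cannot by itself force this cancellation: the all-$+1$ signing gives $qI_v+(q^2-q)J_v$, and evaluating $WW^{\rm T}=q^2I_v$ at the trivial character shows any valid signing must split the support into pieces of sizes $(q^2+q)/2$ and $(q^2-q)/2$ — producing such a split with all correlations zero is exactly where Seberry and Whiteman need the algebraic structure of $GF(q^3)$ over $GF(q)$ (a quadratic-character-type signing verified by character-sum computations), not merely the difference-set identity. The phrase ``deleting the single overlap position in each cyclic shift'' is not a well-defined operation on a circulant and does not repair this. In short, your reduction to checking a single row of $WW^{\rm T}$ is legitimate because everything in sight is circulant, but the matrix $W$ whose row you would check has not actually been constructed.
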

\begin{corollary}[\cite{Eades, GS}]\label{cw}
 Suppose that $q$ is a prime power and $c$ is any positive integer. Then there is a circulant $W\big(c(q^2+q+1), q^2\big).$
\end{corollary}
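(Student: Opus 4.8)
The plan is to take the circulant $W(q^2+q+1,q^2)$ guaranteed by Theorem \ref{cirw} and ``dilate'' its defining first row by the factor $c$, inserting $c-1$ zeros between consecutive entries, so as to obtain a circulant matrix of order $c(q^2+q+1)$ with the same weight $q^2$. Write $n=q^2+q+1$ and $k=q^2$, and let $W$ be the circulant $W(n,k)$ from Theorem \ref{cirw}, with first row $a=(a_0,\ldots,a_{n-1})$, $a_i\in\{0,\pm 1\}$. Since $W$ is a circulant weighing matrix, $WW^{\rm T}=kI_n$ is equivalent to the two conditions $\sum_{i=0}^{n-1}a_i^2=k$ and $\sum_{i=0}^{n-1}a_i a_{i+m}=0$ for every $m\not\equiv 0\pmod n$ (indices reduced mod $n$); conversely, any $\{0,\pm 1\}$ sequence satisfying these is the first row of a circulant weighing matrix.

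Next I would define the dilated sequence $b=(b_0,\ldots,b_{cn-1})$ of length $cn$ by $b_{ci}=a_i$ for $0\le i\le n-1$ and $b_j=0$ whenever $c\nmid j$, and let $B$ be the circulant matrix of order $cn$ with first row $b$. Because the exponents $ci$ are pairwise distinct modulo $cn$, the sequence $b$ has entries in $\{0,\pm 1\}$ and exactly as many nonzero entries as $a$, so $\sum_{j=0}^{cn-1}b_j^2=k$. It remains to check that the periodic autocorrelation of $b$ vanishes at every nonzero shift.

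The core of the argument is this autocorrelation computation at a shift $s$ with $0<s<cn$. If $c\nmid s$, then a product $b_j b_{j+s}$ can be nonzero only when $c\mid j$ and $c\mid(j+s)$, which forces $c\mid s$, a contradiction; hence the autocorrelation at shift $s$ is $0$. If $s=cm$ with $0<m<n$, the surviving terms are those with $j=ci$, and using $c(i+m)\bmod cn=c\big((i+m)\bmod n\big)$ the autocorrelation of $b$ at shift $cm$ equals the autocorrelation of $a$ at shift $m$, which is $0$ by the properties of $a$. Therefore $b$ meets the vanishing-autocorrelation condition at all nonzero shifts, so $BB^{\rm T}=kI_{cn}$ and $B$ is a circulant $W\big(c(q^2+q+1),q^2\big)$. (Equivalently, one may phrase the verification through the ring homomorphism $\mathbb{Z}[y]/(y^n-1)\to\mathbb{Z}[x]/(x^{cn}-1)$ sending $y\mapsto x^c$, which carries the identity $f(y)f(y^{-1})=k$ for the representing polynomial $f$ of $W$ to the analogous identity for $f(x^c)$.) There is no genuine obstacle beyond bookkeeping here; the only point requiring care is confirming that shifts not divisible by $c$ contribute nothing while shifts divisible by $c$ collapse exactly onto the autocorrelation of $a$.
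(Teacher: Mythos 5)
Your construction is exactly the paper's: both dilate the first row of the circulant $W(q^2+q+1,q^2)$ by inserting $c-1$ zeros between consecutive entries and take the circulant of order $c(q^2+q+1)$ with that first row. The only difference is how orthogonality is verified. The paper passes to the generating polynomial $\psi(y)=y^{1-c}\phi(y^c)$, observes that $|\psi(\xi)|^2=q^2$ at every $c(q^2+q+1)$-th root of unity, and then invokes the finite Parseval relation to conclude that all nonzero-shift periodic autocorrelations vanish. You instead check the autocorrelations directly: shifts not divisible by $c$ contribute nothing because the supports of the shifted sequences cannot overlap, and shifts of the form $cm$ collapse onto the autocorrelation of the original row at shift $m$. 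Your verification is correct (the key identity $ci+cm \bmod cn = c\big((i+m)\bmod n\big)$ is exactly what makes the collapse work) and is more elementary, avoiding the Fourier-analytic machinery; the paper's Parseval route is slightly slicker once the polynomial identity $\psi(\xi)\psi(\xi^{-1})=q^2$ is in hand, and you yourself note the equivalent formulation via the ring homomorphism $y\mapsto x^c$, which is in substance what the paper does. Either verification is acceptable; there is no gap.
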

\begin{proof}
 Let $c$ be a fixed positive integer. From Theorem \ref{cirw}, we know that there exists a circulant $W\big(q^2+q+1, q^2\big).$ Suppose that the first row of this matrix is
$\big(a_1, a_2, \ldots, a_{q^2+q+1}\big).$ Let $$\phi(x)=\sum_{i=1}^{q^2+q+1}a_ix^i.$$ Thus, $\phi(\xi)\phi(\xi^{-1})= q^2,$ where $\xi$ is a primitive root of unity and $\xi^{q^2+q+1}=1.$
For $1\leq j\leq c(q^2+q+1)$ define
      $$b_j:=\left\{
      \begin{array}{l l}
      \displaystyle a_{\lceil \frac{j}{c}\rceil} \ \ \ j\equiv 1  {\pmod c}& \\
       0 \ \ \ \ \ \ \ {\rm otherwise}& 
       \end{array} \right.,$$
where $\lceil x\rceil$ is the smallest integer greater than or equal to $x.$
We show that if $W={\rm circ}\big(b_1, b_2, \ldots, b_{c(q^2+q+1)}\big)$, then $W$ is a $W\big(c(q^2+q+1), q^2\big).$ To see this, let 
$$\psi(y)=\sum_{j=1}^{c(q^2+q+1)}b_jy^j.$$
Thus we have $$\psi(y)=\sum_{i=1}^{q^2+q+1}a_iy^{c(i-1)+1}=y^{1-c}\sum_{i=1}^{q^2+q+1}a_iy^{ci}.$$ Since  $\phi(\xi)\phi(\xi^{-1})= q^2,$ 
for all $\xi$ such that $\xi^{q^2+q+1}=1,$ $\psi(\xi)\psi(\xi^{-1})=q^2,$ for all $\xi$ such that $\xi^{q^2+q+1}=1.$
Applying the finite Parseval relation $$\sum_{i=1}^{c(q^2+q+1)}b_ib_{i+r}=\frac{1}{c(q^2+q+1)}\sum_{j=1}^{c(q^2+q+1)}|\psi(\xi^j)|^2\xi^{jr},$$
where $i+r-1$ is reduced modulo $c(q^2+q+1)$, for $r=0$ gives $$\sum_{i=1}^{c(q^2+q+1)}b_i^2=\frac{1}{c(q^2+q+1)}\Big(c(q^2+q+1)q^2\Big)=q^2,$$
and for $1\leq r\leq c(q^2+q+1)-1$, $\sum_{i=1}^{c(q^2+q+1)}b_ib_{i+r}=0$. Therefore, $W$ is a circulant $W\big(c(q^2+q+1), q^2\big).$
\end{proof}
The next lemma shows how to make a symmetric OD to be used for Theorem \ref{sym}.
\begin{lemma}\label{symod}
 Let $k$ be a positive integer. Then there exists a symmetric $OD\big(2^k; \ 1_{(k)}\big)$. 
\end{lemma}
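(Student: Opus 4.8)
The plan is to induct on $k$, constructing a symmetric $OD\big(2^{k+1}; 1_{(k+1)}\big)$ from a symmetric $OD\big(2^{k}; 1_{(k)}\big)$ by a single doubling step, so that the order doubles and exactly one fresh variable is introduced at each stage. For the base case $k=1$ I would take $X_1 = x_1 I_2$, which is symmetric and satisfies $X_1 X_1^{\rm T} = x_1^2 I_2$, hence is a symmetric $OD(2; 1)$.

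For the inductive step, suppose $X = X_k$ is a symmetric $OD\big(2^k; 1_{(k)}\big)$ in the variables $x_1,\ldots,x_k$, so that $X = X^{\rm T}$ and $X X^{\rm T} = \big(\sum_{j=1}^k x_j^2\big) I_{2^k}$. Introducing a new variable $x_{k+1}$, I would set
$$X_{k+1} = \begin{pmatrix} X & x_{k+1} I_{2^k} \\ x_{k+1} I_{2^k} & -X \end{pmatrix}$$
and then verify three points. First, $X_{k+1}$ is symmetric: the diagonal blocks $X$ and $-X$ are symmetric by hypothesis, and the two off-diagonal blocks are equal scalar matrices. Second, every entry of $X_{k+1}$ is $0$ or $\pm$ a single variable, and the underlying $\{0,\pm 1\}$ matrices are pairwise disjoint (the matrix carrying $x_{k+1}$ is supported off the diagonal blocks, while the matrices carrying $x_1,\ldots,x_k$ remain disjoint by induction), so by Lemma \ref{OD} it suffices to establish the orthogonality identity. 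Third, using symmetry to replace $X X^{\rm T}$ by $X^2 = \big(\sum_{j=1}^k x_j^2\big) I$, a block multiplication gives diagonal blocks equal to $\big(\sum_{j=1}^{k+1} x_j^2\big) I_{2^k}$ and off-diagonal blocks equal to $x_{k+1} X - x_{k+1} X = 0$, whence $X_{k+1} X_{k+1}^{\rm T} = \big(\sum_{j=1}^{k+1} x_j^2\big) I_{2^{k+1}}$. This completes the induction.

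The only genuine design choice, and the step I would watch most carefully, is the sign pattern of the diagonal blocks. Placing $X$ and $-X$ (rather than $X$ and $X$) on the diagonal is exactly what makes the two off-diagonal blocks of the product cancel, while still leaving $X_{k+1}$ symmetric, since $-X$ is symmetric whenever $X$ is. This is the point at which symmetry and orthogonality must be reconciled simultaneously; the fact that the new variable enters through the scalar matrix $x_{k+1} I_{2^k}$ is what makes the cancellation automatic once the sign is fixed, after which the remaining verifications are routine block arithmetic.
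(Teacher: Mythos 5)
Your proof is correct. The base case $X_1=x_1I_2$ is a symmetric $OD(2;\,1)$, and the doubling step preserves symmetry (since $-X$ is symmetric when $X$ is, and the off-diagonal blocks are equal scalar matrices) while the block computation $X_{k+1}^2=\bigl(X^2+x_{k+1}^2I\bigr)\oplus\bigl(X^2+x_{k+1}^2I\bigr)$ with vanishing off-diagonal blocks gives exactly the required identity; the type $1_{(k)}$ is then forced by the quadratic form, so the appeal to Lemma \ref{OD} is not even needed. Your route is genuinely different from the paper's: the paper writes down the $k$ coefficient matrices in closed form as $k$-fold Kronecker products of the $2\times 2$ matrices $P$, $Q$, $I$ (namely $A_1=\otimes_{m=1}^k P$ and $A_n=\otimes_{m=1}^{n-2}I\otimes Q\otimes_{m=n}^k P$) and verifies the conditions of Lemma \ref{OD} directly, whereas you build the design by induction with the doubling array $\left[\begin{smallmatrix}X & x_{k+1}I\\ x_{k+1}I & -X\end{smallmatrix}\right]$, which in tensor notation is $Q\otimes X+x_{k+1}(P\otimes I_{2^k})$; unrolled, your recursion produces a different (though equally valid) family of coefficient matrices. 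The paper's version has the advantage of an explicit one-line formula for each coefficient matrix, which the author reuses in the proof of Lemma \ref{e}; yours has the advantage that the verification is a short, self-contained block computation requiring no claim to be ``directly verified'' about anticommutation of tensor factors. Both are standard realizations of the same underlying Clifford-algebra idea, and either suffices here.
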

\begin{proof}
Define $A_1=\otimes_{m=1}^k  P$ and for $2\le n\le k$, $A_n=\otimes_{m=1}^{n-2}I\otimes Q\otimes_{m=n}^k P$, where $$P= \left[ \begin {array}{rr} 0&1\\ 1&0\end {array} \right]\!, \ \ Q= \left[ \begin {array}{rr} 1&0 \\ 0&-1\end {array} \right]\!, \ \ I= \left[ \begin {array}{rr} 1&0 \\ 0&1\end {array} \right]\!.$$
It can be directly verified that the family $\{A_1,\ldots , A_k\}$ of order $2^k$ satisfies the conditions of Lemma \ref{OD}, and therefore
it makes a symmetric $OD\big(2^k; \ 1_{(k)}\big)$. Note that $P, Q$ and $I$ are symmetric.
\end{proof}
\begin{theorem}[Robinson \cite{Robinson}]\label{R}
All $OD\big(2^t; \ 1,1,a,b,c\big)$ exist, where $t\geq 3$ and $a+b+c=2^t-2$.
 \end{theorem}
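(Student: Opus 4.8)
The plan is to first pass to the family formulation of Lemma~\ref{OD}: constructing an $OD(2^t;1,1,a,b,c)$ amounts to producing five pairwise disjoint $(0,\pm1)$ matrices $P_1,P_2,A,B,C$ of order $2^t$, where $P_1,P_2$ have weight $1$ (hence are signed permutation matrices) and $A,B,C$ have weights $a,b,c$, all pairwise anti-amicable, i.e.\ $XY^{\rm T}=-YX^{\rm T}$. I would then argue by induction on $t$. For the base case $t=3$ I would use the classical full $OD(8;1_{(8)})$ arising from the eight--square (octonion) identity, i.e.\ eight pairwise disjoint, pairwise anti-amicable signed permutation matrices $M_1,\dots,M_8$ of order $8$. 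The point is that \emph{merging} anti-amicable parts adds weights while preserving all the conditions of Lemma~\ref{OD}: if $X,Y$ are disjoint with $XX^{\rm T}=uI$, $YY^{\rm T}=vI$ and $XY^{\rm T}=-YX^{\rm T}$, then $(X+Y)(X+Y)^{\rm T}=(u+v)I$, and $X+Y$ still anti-commutes with every other part. Keeping $M_1,M_2$ as the two weight--$1$ parts and merging the remaining six matrices in three nonempty blocks of sizes $a,b,c$ therefore realises every $(a,b,c)$ with $a+b+c=6$ simultaneously.

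For the inductive step I would assemble order-$2^t$ families from order-$2^{t-1}$ ones using three elementary block operations, each checked directly against Lemma~\ref{OD}. The first is weight--doubling: from a part $G$ of order $n$ with $GG^{\rm T}=uI$ one forms $H=\left[\begin{smallmatrix}G&G\\ G^{\rm T}&-G^{\rm T}\end{smallmatrix}\right]$ of order $2n$, for which $HH^{\rm T}=2uI_{2n}$; this sends a type $(u_1,\dots,u_k)$ to $(2u_1,\dots,2u_k)$. The second adjoins a new weight--$1$ variable through $\left[\begin{smallmatrix}G&xI\\ -xI&G^{\rm T}\end{smallmatrix}\right]$, sending $(u_1,\dots,u_k)$ to $(u_1,\dots,u_k,1)$ in order $2n$. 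The third is the merge above. Since $a+b+c=2^{t}-2=2\,(2^{t-1}-1)$ does not halve to $2^{t-1}-2$, one cannot recurse on the same five--part family; instead I would obtain the even part of the target from a three--part near--full design, doubling an $OD(2^{t-1};a^{*},b^{*},c^{*})$ with $a^{*}+b^{*}+c^{*}=2^{t-1}-1$ to get weights $(2a^{*},2b^{*},2c^{*})$ of sum $2^{t}-2$, and then fitting the two weight--$1$ parts into the two positions per row left vacant by the doubled parts.

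The combinatorial heart is the coverage argument, for which the parity of the target is decisive: since $a+b+c=2^{t}-2$ is even, exactly $0$ or $2$ of $a,b,c$ are odd. The all--even case is delivered by the doubling reduction above. The two--odd case is the delicate one: the two odd weights must be produced by peeling a weight--$1$ signed permutation matrix off a doubled weight--$2$ block and merging it into a neighbouring part to flip its parity. Doing this uniformly forces one to perform the doubling \emph{asymmetrically}, which in turn requires amicable pairs of smaller designs of differing types; the claim then reduces to exhibiting a controlled supply of such pairs together with the induction hypothesis and the base case.

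The step I expect to be the main obstacle is precisely this last one. Weight--doubling manufactures only \emph{even} weights, and the ``$-2$'' in $a+b+c=2^{t}-2$ blocks a clean self--similar halving, so the odd and the extreme small--part distributions are not reached by the symmetric construction. Overcoming this seems to require either enlarging the stock of seed designs beyond order $8$ or replacing symmetric doubling by an amicable--design construction that splits a weight--$2$ block into two prescribed monomials; verifying that the needed splittings exist for all admissible weight distributions, while keeping the entire family pairwise anti-amicable, is the crux on which the theorem turns.
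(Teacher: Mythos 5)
The paper does not prove this statement at all: Theorem~\ref{R} is quoted verbatim from Robinson's 1977 paper on product designs, so there is no internal argument to compare yours against. Judged on its own terms, your sketch is not yet a proof, and you say as much in the final paragraph. The base case is fine: merging pairwise disjoint, pairwise anti-amicable parts of the full $OD(8;1_{(8)})$ does yield every $OD(8;1,1,a,b,c)$ with $a+b+c=6$, since $(X+Y)(X+Y)^{\rm T}=(u+v)I$ and $X+Y$ still anti-commutes with the remaining parts. The induction step, however, has concrete gaps beyond the one you flag. First, the doubling map $G\mapsto\left[\begin{smallmatrix}G&G\\ G^{\rm T}&-G^{\rm T}\end{smallmatrix}\right]$ applied to two parts $G_i,G_j$ produces $H_iH_j^{\rm T}={\rm diag}\big(2G_iG_j^{\rm T},\,2G_i^{\rm T}G_j\big)$, so preserving anti-amicability of the whole family needs $G_i^{\rm T}G_j=-G_j^{\rm T}G_i$ in addition to the hypothesis $G_iG_j^{\rm T}=-G_jG_i^{\rm T}$ of Lemma~\ref{OD}; this extra amicability-type condition is not part of your inductive data and must be carried along explicitly (this is essentially why Robinson works with product designs rather than bare ODs). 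Second, your induction does not close on itself: the hypothesis concerns five-part designs $OD(2^{t-1};1,1,a,b,c)$ with $a+b+c=2^{t-1}-2$, but the step instead invokes all three-part designs $OD(2^{t-1};a^{*},b^{*},c^{*})$ with $a^{*}+b^{*}+c^{*}=2^{t-1}-1$, whose existence for every such triple is itself a nontrivial theorem that neither the base case nor the hypothesis supplies. Third, the two-odd-weights case --- which by your own parity count is half of all targets --- is left as an acknowledged obstacle rather than resolved. Until the asymmetric splitting you describe is actually constructed and verified against all the conditions of Lemma~\ref{OD}, the argument establishes only the base case.
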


We prove the following well known lemma by giving a proof which is different from the proof in \cite[Lemma 7.27]{GS}. 
\begin{lemma}\label{e}
 For any sequence $\big(k_1, k_2, k_3,k_4\big)$ of positive integers, there is a positive integer $d$ such that there is a skew-symmetric $OD\big(2^d;\  k_1, k_2, k_3, k_4\big)$.
\end{lemma}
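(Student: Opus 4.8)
The plan is to build the design directly through Lemma \ref{OD}: I would exhibit four pairwise disjoint $\{0,\pm 1\}$ matrices $A_1,A_2,A_3,A_4$ of some order $2^d$, each skew-symmetric with $A_iA_i^{\rm T}=k_iI$, satisfying $A_iA_j^{\rm T}=-A_jA_i^{\rm T}$ for $i\neq j$; then $\sum_i x_iA_i$ is the desired skew-symmetric $OD(2^d;k_1,k_2,k_3,k_4)$ (skewness of each $A_i$ makes the whole sum skew). The key idea is to separate two jobs: (a) a small fixed ``skeleton'' of skew, pairwise anticommuting matrices that supplies the orthogonality relations, and (b) commuting symmetric weighing matrices that carry the individual weights $k_i$, glued together by Kronecker products.

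For (a): reading the matrix $D$ of Lemma \ref{rat} with $a,b,c$ as variables gives a skew-symmetric $OD(4;1,1,1)$, whose three coefficient matrices $R_1,R_2,R_3$ are disjoint $\{0,\pm 1\}$ matrices that are skew-symmetric, satisfy $R_i^2=-I_4$, and anticommute ($R_iR_j=-R_jR_i$). Since four variables need four anticommuting skew directions and order $4$ supplies only three, I would pass to order $8$ by the Cayley--Dickson-type doubling
$$P_i=\begin{pmatrix} R_i & 0\\ 0 & -R_i\end{pmatrix}\ (i=1,2,3),\qquad F=\begin{pmatrix} 0 & I_4\\ -I_4 & 0\end{pmatrix}.$$
A direct check shows $P_1,P_2,P_3,F$ are disjoint $\{0,\pm 1\}$ matrices, all skew-symmetric, all squaring to $-I_8$, and pairwise anticommuting; the block pattern forces $FP_i=-P_iF$ while the $P_i$ inherit anticommuting from the $R_i$.

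For (b): equating all variables to $1$ in the symmetric $OD(2^{k_i};1_{(k_i)})$ of Lemma \ref{symod} yields a symmetric $W(2^{k_i},k_i)$, call it $S_i$. Placing each in its own Kronecker slot,
$$M_1=S_1\otimes I\otimes I\otimes I,\quad M_2=I\otimes S_2\otimes I\otimes I,\quad M_3=I\otimes I\otimes S_3\otimes I,\quad M_4=I\otimes I\otimes I\otimes S_4,$$
gives symmetric matrices of common order $m=2^{k_1+k_2+k_3+k_4}$ with $M_iM_i^{\rm T}=k_iI_m$ that pairwise commute, since they act on distinct tensor factors. Setting $A_i=M_i\otimes P_i$ for $i=1,2,3$ and $A_4=M_4\otimes F$, each $A_i$ is skew-symmetric (symmetric $\otimes$ skew), is a $W(8m,k_i)$, and the $A_i$ are pairwise disjoint because their second factors are. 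Finally $A_iA_j^{\rm T}=-(M_iM_j)\otimes(\text{skeleton product})$, and the identity $A_iA_j^{\rm T}=-A_jA_i^{\rm T}$ collapses out precisely because the $M_i$ commute while the skeleton anticommutes. This yields a skew-symmetric $OD(2^d;k_1,k_2,k_3,k_4)$ with $d=3+k_1+k_2+k_3+k_4$.

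The main obstacle — and the only genuinely delicate point — is securing four (rather than three) pairwise anticommuting skew directions while keeping everything disjoint and $\{0,\pm 1\}$; this is exactly what forces the order up from $4$ to $8$, and it is also why the weight-carriers must be chosen to commute, so that the sign produced by the anticommuting skeleton is not cancelled. Once the skeleton and the commuting symmetric carriers are in place, the verification of the three conditions of Lemma \ref{OD} is a routine Kronecker-product computation.
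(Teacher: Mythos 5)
Your construction is correct: the quaternion-type matrices $R_1,R_2,R_3$ extracted from $D$ in Lemma \ref{rat} do square to $-I_4$, anticommute and are disjoint; the doubling to $P_1,P_2,P_3,F$ preserves all of this while adding a fourth anticommuting skew direction; and the identities $A_iA_i^{\rm T}=(M_iM_i^{\rm T})\otimes(P_iP_i^{\rm T})=k_iI$ and $A_iA_j^{\rm T}=-(M_iM_j)\otimes(P_iP_j)=-A_jA_i^{\rm T}$ follow exactly as you say from the $M_i$ commuting symmetrically and the skeleton anticommuting skewly. (Your observation that disjointness of the $P_i$ alone forces disjointness of the $A_i=M_i\otimes P_i$ is the right one, since the $M_i$ themselves need not be disjoint.) This is, however, a genuinely different route from the paper's. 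The paper does not build the anticommuting structure by hand: it invokes Robinson's theorem (Theorem \ref{R}) to obtain $A=OD\big(2^{t_1};\,1,k_1,k_2\big)$ and $B=OD\big(2^{t_2};\,1,k_3,k_4\big)$, notes that the non-identity coefficient matrices $A_1,A_2,B_1,B_2$ are automatically skew-symmetric and pairwise ``amicable-negative'' by Lemma \ref{OD}, and splices the two pairs together as $I\otimes A_i\otimes P$ and $B_i\otimes I\otimes Q$ to land in order $2^{t_1+t_2+1}$. The trade-off is quantitative: the paper's order is roughly $8\cdot 2^{\lceil\log_2(k_1+k_2)\rceil+\lceil\log_2(k_3+k_4)\rceil}$, whereas yours is $2^{3+k_1+k_2+k_3+k_4}$, exponentially larger in the weights. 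For the lemma as stated (mere existence of some $d$) both are fine, but this lemma feeds into Theorem \ref{four} and the explicit bound $N(k)$ in the Example with $k_i$ replaced by squares $k_i^2$ --- there the paper gets $OD\big(2^{13};\,2^2,4^2,6^2,6^2\big)$ while your construction would give $d=3+4+16+36+36=95$ --- so your argument, while self-contained and elementary (it avoids Robinson's theorem entirely), would substantially weaken the asymptotic bounds that are the point of the paper.
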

\begin{proof}
Let $t_1$ and $t_2$ be the smallest positive integers such that $1+k_1+k_2\le 2^{t_1}$ and $1+k_3+k_4\le 2^{t_2}$. By Theorem \ref{R}, there are 
$A=OD\big(2^{t_{1}}; \ 1,k_1,k_2\big)$ and $B=OD\big(2^{t_2}; \ 1,k_3,k_4\big).$ Without loss of generality, assume that
 $\big\{I_{2^{t_1}}, A_1,A_2\big\}$ and $\big\{I_{2^{t_2}}, B_1,B_2\big\}$ are two families corresponding to $A$ and $B$ satisfying the conditions
of  Lemma \ref{OD}. Let $P$ and $Q$ be the same matrices as in the proof of Lemma \ref{symod}. It can be directly verified
that the family $$\big\{I_{2^{t_2}}\otimes A_1 \otimes P, \ I_{2^{t_2}}\otimes A_2 \otimes P, \ B_1\otimes I_{2^{t_1}}\otimes Q
, \ B_2\otimes I_{2^{t_1}}\otimes Q \big\}$$ of four skew-symmetric matrices satisfies all conditions of
 Lemma \ref{OD}, and so it makes a skew-symmetric $OD\big(2^{t_1+t_2+1}; \ k_1, k_2, k_3, k_4\big).$ 
\end{proof}
\begin{corollary} [\cite{GS}]\label{g}
 Given any sequence $\big(k_1,k_2,k_3,k_4\big)$ of positive integers, there exists a positive integer $d$ such that there is an
$OD\big(2^d; \ 1, k_1, k_2, k_3, k_4\big).$ 
\end{corollary}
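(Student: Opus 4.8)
The plan is to keep the very same exponent $d$ produced by Lemma \ref{e} and simply adjoin the identity matrix as the new weight-$1$ variable. Concretely, Lemma \ref{e} supplies a skew-symmetric $OD\big(2^d;\ k_1,k_2,k_3,k_4\big)$, that is, a family $\{A_1,A_2,A_3,A_4\}$ of pairwise disjoint matrices with entries in $\{0,\pm1\}$ such that each $A_i$ is a $W(2^d,k_i)$, each satisfies $A_i^{\rm T}=-A_i$, and $A_iA_j^{\rm T}=-A_jA_i^{\rm T}$ for $i\ne j$. I would then set $A_0:=I_{2^d}$ and argue that the enlarged family $\{A_0,A_1,A_2,A_3,A_4\}$ satisfies the hypotheses of Lemma \ref{OD}, whence it generates an $OD\big(2^d;\ 1,k_1,k_2,k_3,k_4\big)$.

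The verification splits into the three requirements of Lemma \ref{OD}. First, $A_0=I_{2^d}$ is a $W(2^d,1)$ and each $A_i$ is already a $W(2^d,k_i)$, so condition $(i)$ holds. Second, for disjointness it suffices to observe that every skew-symmetric matrix has zero diagonal, so each $A_i$ with $1\le i\le4$ is disjoint from the diagonal matrix $A_0$; disjointness among $A_1,\ldots,A_4$ is already guaranteed by Lemma \ref{e}. Third, for the anti-commuting condition $(ii)$ the only new pairs are $(A_0,A_i)$, and here skew-symmetry does all the work: since $A_i^{\rm T}=-A_i$ and $A_0=A_0^{\rm T}=I_{2^d}$, one gets $A_0A_i^{\rm T}=-A_i=-A_iA_0^{\rm T}$, exactly as required.

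I do not expect a genuine obstacle here; the single point that must be gotten right is precisely the one that motivated stating Lemma \ref{e} in skew-symmetric form. A symmetric weight-$1$ matrix (the identity) anti-commutes, in the sense of condition $(ii)$, with any skew-symmetric matrix, and it is automatically disjoint from matrices having zero diagonal. Thus the whole corollary reduces to the observation that the skew-symmetric design of Lemma \ref{e} may be augmented by $I_{2^d}$ without disturbing any of the defining relations, producing the desired $OD\big(2^d;\ 1,k_1,k_2,k_3,k_4\big)$ with $d=t_1+t_2+1$ in the notation of Lemma \ref{e}.
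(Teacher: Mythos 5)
Your argument is correct and is precisely the intended derivation: the paper states Corollary \ref{g} without proof as an immediate consequence of Lemma \ref{e}, relying on the standard fact (used again later in the paper) that a skew-symmetric design can be augmented by $wI$ because the identity is disjoint from, and anti-amicable with, any skew-symmetric zero-diagonal matrix. Your verification of the three conditions of Lemma \ref{OD} for the enlarged family $\{I_{2^d},A_1,A_2,A_3,A_4\}$ is exactly right.
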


The following theorem, due to Geramita and Wallis, is known.
\begin{theorem}{\rm (Geramita and Wallis \cite[Theorem 7.14]{GS}).}\label{sym1}
 Suppose that $k$ is a square. Then there is an integer $N=N(k)$ such that for each $n\geq N$, there is a $W(n,k).$
\end{theorem}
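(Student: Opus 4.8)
The plan is to produce two weighing matrices of weight $k$ whose orders are relatively prime, and then to fill in all sufficiently large orders by the combining device of Lemma \ref{twood}. Note first that a $W(n,k)$ is precisely an $OD(n;k)$ (a design in a single variable $x_1$), so Lemma \ref{twood} applies with $m=1$ and $u_1=k$: given weighing matrices of weight $k$ in orders $n_1,n_2$ with $h={\rm gcd}(n_1,n_2)$, there is an $N$ such that a $W(ht,k)$ exists for all $t\ge N$. In particular, if I can arrange $h=1$, then a $W(t,k)$ exists for all $t\ge n_1n_2$, which is exactly the statement. Write $k=s^2$.

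First I would build an odd-order weighing matrix of weight $s^2$. Factor $s=\prod_{i=1}^{r}q_i$ into prime powers $q_i$. By Theorem \ref{cirw} there is a circulant $W(q_i^2+q_i+1,\,q_i^2)$ for each $i$, and since the Kronecker product of a $W(m_1,k_1)$ and a $W(m_2,k_2)$ is a $W(m_1m_2,\,k_1k_2)$ (because $(A\otimes B)(A\otimes B)^{\rm T}=(AA^{\rm T})\otimes(BB^{\rm T})$, and the nonzero counts multiply), the product $\otimes_{i=1}^{r}W(q_i^2+q_i+1,\,q_i^2)$ is a $W(P,\,s^2)$ with $P=\prod_{i=1}^{r}(q_i^2+q_i+1)$. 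Each factor $q_i^2+q_i+1=q_i(q_i+1)+1$ is odd, hence $P$ is odd.

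Next I would build an even-order weighing matrix of the same weight. Applying Lemma \ref{symod} with parameter $s^2$ gives an $OD(2^{s^2};\,1_{(s^2)})$; equating all $s^2$ variables to $1$ turns it into a $\{0,\pm1\}$ matrix with exactly $s^2$ nonzero entries in each row and Gram matrix $s^2I$, that is, a $W(2^{s^2},\,s^2)$, whose order is a power of $2$. Since $P$ is odd we have ${\rm gcd}(2^{s^2},P)=1$, so applying Lemma \ref{twood} to $A=W(2^{s^2},s^2)$ and $B=W(P,s^2)$ (both of type $(s^2)$) with $h=1$ yields a $W(t,s^2)$ for every $t\ge N(k):=2^{s^2}P$, proving the theorem. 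Equivalently, weighing matrices of a fixed weight are closed under direct sum of orders, so the attainable orders form a numerical semigroup containing the coprime elements $2^{s^2}$ and $P$; Lemma \ref{ex} then places every integer $\ge 2^{s^2}P$ in it.

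The main obstacle is the coprimality requirement: the combining step reaches all large $n$ only when the set of available orders has overall gcd $1$, whereas the circulant families of Theorem \ref{cirw} and Corollary \ref{cw} deliver only odd orders that are multiples of fixed numbers, never relatively prime among themselves, so the circulant constructions alone cannot break out of a residue class. The decisive observation is that Lemma \ref{symod} supplies a weighing matrix of weight $s^2$ of even (in fact power-of-two) order, which is automatically coprime to the odd order $P$; once both a suitable even-order and a suitable odd-order matrix of weight $s^2$ are in hand, the number-theoretic filling-in is routine.
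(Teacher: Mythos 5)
Your argument is correct and is essentially the method the paper uses for its strengthened version (Theorem \ref{sym}): factor $k$ into prime-power squares, Kronecker the circulant $W(q_i^2+q_i+1,q_i^2)$'s to get an odd-order $W$ of weight $k$, get a power-of-two-order $W$ of weight $k$ from Lemma \ref{symod}, and combine via Lemma \ref{twood} using coprimality. The only difference is that the paper additionally multiplies by back-diagonal matrices $R_i$ to make the odd-order matrix symmetric, which you do not need for the non-symmetric statement.
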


We use a slightly different method to the proof of Theorem \ref{sym1} to give a proof of the following improved result.
\begin{theorem}\label{sym}
 Suppose that $k$ is a square. Then there is an integer $N=N(k)$ such that for each $n\geq N$, there is a symmetric $W(n,k).$
\end{theorem}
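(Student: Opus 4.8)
The plan is to reduce the statement to finding two symmetric weighing matrices of weight $k$ whose orders are coprime, and then to invoke the symmetric half of Lemma \ref{twood}. A $W(n,k)$ is precisely an $OD(n;k)$, a single-variable orthogonal design for which condition $(ii)$ of Lemma \ref{OD} is vacuous; so once I exhibit a symmetric $W(n_1,k)$ and a symmetric $W(n_2,k)$ with $\gcd(n_1,n_2)=1$, Lemma \ref{twood} (with $h=1$) hands me an integer $N$ such that a symmetric $OD(t;k)=W(t,k)$ exists for every $t\ge N$, which is exactly the assertion. Write $k=s^2$. The case $s=1$ is trivial since $I_n$ is a symmetric $W(n,1)$ for all $n$, so I assume $s\ge 2$ and aim to produce one odd order and one power-of-two order.

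For the odd order I would start from the circulant matrices of Seberry--Whiteman. Factor $s=\prod_i p_i^{a_i}$ and set $q_i=p_i^{a_i}$, so each $q_i$ is a prime power and $s^2=\prod_i q_i^2$. By Theorem \ref{cirw} there is a circulant $W(q_i^2+q_i+1,q_i^2)$; multiplying such a circulant $C$ on the right by the back-diagonal matrix $R$ turns it into the back-circulant, hence symmetric, matrix $CR$, and since $RR^{\rm T}=I$ this $CR$ is again a $W(q_i^2+q_i+1,q_i^2)$. Taking the Kronecker product of these symmetric weighing matrices over all $i$ yields a symmetric $W(n_1,s^2)$ with $n_1=\prod_i(q_i^2+q_i+1)$, using that $(A\otimes B)(A\otimes B)^{\rm T}=(AA^{\rm T})\otimes(BB^{\rm T})$ and that a Kronecker product of symmetric matrices is symmetric. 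The key point is that each factor $q_i^2+q_i+1$ is odd, so $n_1$ is odd.

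For the power-of-two order I would apply Lemma \ref{symod} with parameter $s^2$ to obtain a symmetric $OD(2^{s^2};1_{(s^2)})$, and then equate all $s^2$ variables to $1$; substitution preserves symmetry and produces a symmetric $W(2^{s^2},s^2)$. Now $n_1$ is odd while $n_2:=2^{s^2}$ is a power of two, so $\gcd(n_1,n_2)=1$, and both matrices are symmetric single-variable ODs of type $(s^2)$. Feeding them into the symmetric conclusion of Lemma \ref{twood} produces the desired $N=N(k)$.

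I expect the main obstacle to be arranging coprime orders at all: the natural symmetric construction, back-circulants built from $W(q^2+q+1,q^2)$, only delivers orders that are multiples of the single odd number $q^2+q+1$, so any two of them share a common factor and Lemma \ref{twood} would then fill in only an arithmetic progression of orders rather than all large $n$. Breaking this requires the second, structurally different source of symmetric weighing matrices of weight $s^2$, namely the dyadic design of Lemma \ref{symod}, to supply a power-of-two order. It is also worth noting where the hypothesis that $k$ is a square is essential: for odd $n$ the trace argument of Theorem \ref{nonodd} forces ${\rm tr}(W)=c\sqrt{k}$ with $c$ odd, and only when $\sqrt{k}$ is an integer is this compatible with the integer trace of a symmetric $\{0,\pm1\}$-matrix, so the odd orders that coprimality forces us to reach are available precisely because $k=s^2$.
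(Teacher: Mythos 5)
Your proposal is correct and follows essentially the same route as the paper: decompose $k$ into squares of prime powers, use the Seberry--Whiteman circulants multiplied by back-diagonal matrices and tensored together to get a symmetric $W$ of odd order, use Lemma \ref{symod} to get a symmetric $W(2^k,k)$, and combine via the symmetric case of Lemma \ref{twood}. The only differences are cosmetic (you write $k=s^2$ and separately note the trivial case $s=1$, which the paper absorbs by allowing $q_i=1$).
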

\begin{proof}
 Assume that $ k=\prod_{i=1}^m q_i^{2},$ where $q_i$ is either 1 or a prime power.  By Theorem \ref{cirw}, for each $i$ there exists a circulant $W_i=W\big(q_i^2+q_i+1, q_i^2\big).$
Let $$W=\otimes_{i=1}^{m} W_iR_i,$$ where $R_i$ is the back-diagonal matrix of order $q_i^2+q_i+1$. It can be seen that $W$ is a symmetric  $W\left(\prod_{i=1}^m(q_i^2+q_i+1), \prod_{i=1}^m q_i^{2}\right).$

Thus, there is an odd number $t=\prod_{i=1}^m(q_i^2+q_i+1)$ such that there is a symmetric $W(t,k).$ Moreover,
from Lemma \ref{symod}, there exists a symmetric $OD\big(2^k; 1_{(k)}\big)$ and so a symmetric $W(2^k,k).$ Now since $t$ is odd, 
${\rm gcd}(2^k,t)=1.$ Lemma \ref{twood} implies that there is a positive integer $N=N(k)$ such that for each $n\geq N$, there exists
a symmetric $W(n,k).$ 
\end{proof}

We prove the following theorem  by a slightly different method to the proof that first was given by  Eades \cite[Theorem 7.15]{Eades, GS}.
\begin{theorem}\label{two}
 Suppose that $k=k_1^2+k_2^2$, where $k_1$ and $k_2$ are two nonzero integers. Then there is an integer $N=N(k)$ such that for each $n\geq N$, there is an $OD\big(2n; \ k_1^2,k_2^2\big).$
\end{theorem}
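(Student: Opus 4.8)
The plan is to mirror the proof of Theorem \ref{sym}: I will produce two ODs of type $(k_1^2, k_2^2)$ whose orders have greatest common divisor exactly $2$, and then invoke Lemma \ref{twood} to fill in all large even orders. Concretely, I aim for one OD of order $2n_0$ with $n_0$ odd and one of order $2^d$; since $\gcd(2n_0, 2^d) = 2$ when $n_0$ is odd, Lemma \ref{twood} with $h = 2$ yields an $OD(2t; k_1^2, k_2^2)$ for every $t \ge N$, which is exactly the assertion with $n = t$.

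The core construction is an order-doubling array that turns a commuting pair of weighing matrices into a two-variable OD. Suppose $C$ and $D$ are square matrices of the same order $n_0$ with entries in $\{0, \pm 1\}$ satisfying $CC^{\rm T} = C^{\rm T}C = k_1^2 I_{n_0}$, $DD^{\rm T} = D^{\rm T}D = k_2^2 I_{n_0}$, and $CD = DC$. Then
$$ M = \begin{bmatrix} x_1 C & x_2 D \\ x_2 D^{\rm T} & -x_1 C^{\rm T} \end{bmatrix} $$
is an $OD(2n_0; k_1^2, k_2^2)$: its entries lie in $\{0, \pm x_1, \pm x_2\}$, and a direct block computation gives $MM^{\rm T} = (k_1^2 x_1^2 + k_2^2 x_2^2) I_{2n_0}$, where the diagonal blocks use the weighing identities and the off-diagonal blocks vanish because $C$ and $D$ commute. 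Note that disjointness of $C$ and $D$ is not required, since they occupy disjoint blocks.

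It remains to manufacture such a commuting pair of odd order. Writing $k_1 = \prod_p p^{e_p}$ over the primes dividing $k_1$, each $p^{2e_p}$ is a prime power square, so Theorem \ref{cirw} supplies a circulant $W(p^{2e_p} + p^{e_p} + 1, p^{2e_p})$; their Kronecker product is a weighing matrix $C_1 = W(M_1, k_1^2)$, and likewise $C_2 = W(M_2, k_2^2)$ from the primes dividing $k_2$. Each factor order $q^2 + q + 1$ is odd, so $M_1$ and $M_2$ are odd; setting $\tilde C = C_1 \otimes I_{M_2}$ and $\tilde D = I_{M_1} \otimes C_2$ produces the desired commuting pair of the odd order $n_0 = M_1 M_2$, since $\tilde C \tilde D = C_1 \otimes C_2 = \tilde D \tilde C$. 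Feeding $\tilde C, \tilde D$ into $M$ yields an $OD(2n_0; k_1^2, k_2^2)$ of order $2 \times (\text{odd})$. For the power-of-two order, Lemma \ref{e} applied to the quadruple $(k_1^2, k_2^2, 1, 1)$ gives a skew-symmetric $OD(2^d; k_1^2, k_2^2, 1, 1)$; equating the last two variables to zero leaves an $OD(2^d; k_1^2, k_2^2)$, since setting variables to zero preserves the orthogonality identity.

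I expect the main obstacle to be the base construction at odd order, and specifically the insistence that $n_0$ be odd, as this is what forces $\gcd = 2$ and hence all orders $2n$ rather than only multiples of a larger power of $2$. Passing from prime-power circulants to a composite square weight tempts one to take a single circulant weighing matrix of the full weight, which runs into the fact that Kronecker products of circulants need coprime orders to remain circulant; the array $M$ sidesteps this by requiring only commutativity, which the split $C_1 \otimes I$, $I \otimes C_2$ guarantees for free while keeping the order odd. Once the two base designs are in hand, the appeal to Lemma \ref{twood} is routine.
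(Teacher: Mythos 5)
Your proof is correct, and its skeleton --- one base $OD\big(2n_0;\ k_1^2,k_2^2\big)$ with $n_0$ odd, one of order $2^d$, then Lemma \ref{twood} with $h=2$ --- matches the paper's. The base construction, however, is genuinely different. The paper writes $k_j^2=\prod_{i=1}^m q_{ij}^2$ over a \emph{common} index set, takes $b_i={\rm lcm}\big\{q_{i1}^2+q_{i1}+1,\ q_{i2}^2+q_{i2}+1\big\}$, and invokes Corollary \ref{cw} to place both circulant weighing matrices $W_{i1},W_{i2}$ in the \emph{same} order $b_i$; the off-diagonal blocks of its array then cancel via the circulant/back-circulant amicability $BC^{\rm T}=CB^{\rm T}$, which is why the back-diagonal matrices $R_i$ appear. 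You instead obtain commutativity for free by tensoring with identities, $\tilde C=C_1\otimes I_{M_2}$ and $\tilde D=I_{M_1}\otimes C_2$, so you can skip Corollary \ref{cw} and the $R_i$ entirely, and your two-by-two array needs only $CD=DC$ together with the two-sided weighing identities; the block verification you sketch is right. The price is the size of the odd order: your $n_0=M_1M_2$ is the full product of all the $q^2+q+1$ factors, whereas the paper's $q=\prod_i b_i$ uses least common multiples and is generally much smaller (for $k_1=k_2=3$ one gets $q=13$ versus $n_0=169$), and since the paper's declared aim is to improve the asymptotic bound $N(k)$, its extra machinery is doing real work even though your route suffices for the bare existence statement. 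Your source for the power-of-two design (Lemma \ref{e} applied to $(k_1^2,k_2^2,1,1)$ followed by setting two variables to zero) also differs from the paper's direct appeal to Robinson's Theorem \ref{R}, but both are legitimate.
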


\begin{proof}
For $j=1,2$, let $k_j^2=\prod_{i=1}^mq_{ij}^2,$ where $q_{ij}$ is either 1 or a prime power.
For each $i$, $1\le i \le m$, let $b_i={\rm lcm}\big\{q_{i1}^2+q_{i1}+1, q_{i2}^2+q_{i2}+1\big\}.$ From Corollary \ref{cw}, 
for each $j$, $j=1, 2$, and each $i$, $1\le i\le m$, there exists a circulant $W_{ij}=W\big(b_i, q_{ij}^2\big).$
It can be  seen that the following $2q\times 2q$ matrix  is an $OD\big(2q; \ k_1^2, k_2^2\big),$
$$\left[ \begin {array}{cc} 
x\displaystyle\otimes_{i=1}^{m}W_{i1}R_i&y\displaystyle\otimes_{i=1}^{m}W_{i2} \\ y\displaystyle\otimes_{i=1}^{m}W_{i2}&
-x\displaystyle\otimes_{i=1}^{m}W_{i1}R_i\end {array} \right]\!\!,
$$ where $R_i$ is the back-diagonal matrix of order $b_i, $ and
$q=\prod_{i=1}^m b_i$ is an odd number. 
From Theorem \ref{R}, one can choose the smallest positive integer $k$ such that there is an $OD\big(2^k; \ k_1^2, k_2^2\big).$ Since ${\rm gcd}\big(2q, 2^k\big)=2,$ Lemma \ref{twood} implies that
there is an integer $N=N(k)$ such that for each $n\geq N$, there is an $OD\big(2n;\ k_1^2, k_2^2\big).$
\end{proof}
Using the methodology in the proof of Theorem \ref{two}, the asymptotic bounds for the following two corollaries given by Eades \cite{Eades} are improved.
\begin{corollary}
 Suppose that  $k$ is the sum of two nonzero integer squares. Then there is an integer $N=N(k)$ such that for each $n\geq N$, there is a $W(2n, k).$
\end{corollary}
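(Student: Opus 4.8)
The plan is to obtain the weighing matrix directly from the orthogonal design furnished by Theorem~\ref{two}. First I would write $k=k_1^2+k_2^2$ with $k_1$ and $k_2$ nonzero integers, which is possible precisely because $k$ is assumed to be the sum of two nonzero integer squares. Theorem~\ref{two} then supplies an integer $N=N(k)$ such that for every $n\ge N$ there exists an $OD\big(2n;\ k_1^2,k_2^2\big)$.

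The remaining step is to pass from this orthogonal design to a weighing matrix of the desired weight. As recorded in the Introduction, equating every variable to $1$ in an $OD\big(n;\ s_1,\ldots,s_\ell\big)$ produces a weighing matrix $W(n,k)$ whose weight $k$ equals $\sum_{j=1}^{\ell}s_j$, since the number of nonzero entries in each row after the substitution is exactly $\sum_{j=1}^{\ell}s_j$. Applying this to $OD\big(2n;\ k_1^2,k_2^2\big)$ by setting its two variables $x_1=x_2=1$ yields a $W\big(2n,\ k_1^2+k_2^2\big)$. Since $k_1^2+k_2^2=k$, this is precisely a $W(2n,k)$ for each $n\ge N$, as required.

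I expect no serious obstacle here: the statement is an immediate corollary of Theorem~\ref{two}, and the only point that needs care is confirming that equating the variables to $1$ collapses the type $(k_1^2,k_2^2)$ into the single weight $k_1^2+k_2^2$, rather than retaining the two parameters separately. This follows at once from the definition of weight as the count of nonzero entries per row, so the whole argument reduces to invoking the theorem and performing the substitution.
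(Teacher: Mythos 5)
Your proof is correct and is essentially identical to the paper's: both write $k=k_1^2+k_2^2$, invoke Theorem \ref{two} to get an $OD\big(2n;\ k_1^2,k_2^2\big)$ for all $n\ge N$, and set the variables to $1$ to obtain a $W(2n,k)$. The extra care you take in checking that the type collapses to the weight $k_1^2+k_2^2$ is a fine, if routine, elaboration.
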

\begin{proof}
 Let $k=k_1^2+k_2^2,$ where $k_1$ and $k_2$ are integers. From Theorem \ref{two}, there is an integer $N=N(k)$ such that for any $n\geq N$, there is an $OD\big(2n;\ k_1^2,k_2^2\big),$ and 
so a $W(2n,k).$ 
\end{proof}

\begin{corollary}\label{2n11}
 Suppose that $d$ is an integer square. Then there exists an integer $N=N(d)$ such that for each $n\geq N$, there is a skew-symmetric $W(2n,d).$
\end{corollary}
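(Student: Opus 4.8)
The plan is to reduce the statement to the asymptotic existence of ordinary weighing matrices of weight $d$, which we already control, and then to ``skew-symmetrize'' each such matrix by a single block embedding. The central device is the observation that for any $W(m,d)$, say $A$, the block matrix
\[
W=\begin{bmatrix} 0 & A \\ -A^{\rm T} & 0\end{bmatrix}
\]
is a skew-symmetric $W(2m,d)$: indeed $W^{\rm T}=-W$ by inspection, while
\[
WW^{\rm T}=\begin{bmatrix} AA^{\rm T} & 0 \\ 0 & A^{\rm T}A\end{bmatrix}=dI_{2m},
\]
so $W$ has entries in $\{0,\pm1\}$, constant row weight $d$, and is skew-symmetric. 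Thus every weighing matrix of weight $d$ and order $m$ produces a skew-symmetric one of order $2m$.

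First I would invoke Theorem \ref{sym}: since $d$ is a square, there is $N=N(d)$ such that for each $n\ge N$ there exists a (symmetric) $W(n,d)$, say $A_n$. Embedding $A_n$ as above yields a skew-symmetric $W(2n,d)$ for every $n\ge N$, which is exactly the assertion. Because the supply of the matrices $A_n$ comes from the circulant/Kronecker construction of Theorem \ref{sym} (itself built on Corollary \ref{cw} and Lemma \ref{twood}), the resulting threshold $N(d)$ inherits the improvement over the earlier bound of Eades, as announced in the paragraph preceding the statement.

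If instead one wants the bound made explicit in the style of Theorem \ref{two}, I would produce two skew-symmetric seeds and combine them with Lemma \ref{twood}. Writing $d=\prod_i q_i^2$ with each $q_i$ a prime power or $1$, the symmetric circulant-based $W(q,d)$ of odd order $q=\prod_i(q_i^2+q_i+1)$ from the proof of Theorem \ref{sym} embeds into a skew-symmetric $W(2q,d)$, while the symmetric $W(2^d,d)$ obtained from Lemma \ref{symod} embeds into a skew-symmetric $W(2^{d+1},d)$. Since $q$ is odd, $\gcd(2q,2^{d+1})=2$, so Lemma \ref{twood} (with $h=2$) delivers designs in every order $2t$ with $t\ge q\cdot 2^{d}$, that is, a skew-symmetric $W(2n,d)$ for all $n\ge N(d)$.

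The only point requiring care, and the step I expect to be the main obstacle, is that Lemma \ref{twood} is stated for symmetric designs, whereas here I need its skew-symmetric analogue. This is immediate from the construction used in that lemma: the combined matrices have the shape $I_a\otimes A_1\oplus I_b\otimes B_1$, and since $I_a\otimes(\text{skew-symmetric})$ is skew-symmetric and a direct sum of skew-symmetric matrices is skew-symmetric, the construction preserves skew-symmetry exactly as it preserves symmetry. One should also confirm that taking $h=2$ produces precisely the even orders $2n$ for all large $n$, which is what the corollary requires.
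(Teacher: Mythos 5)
Your proof is correct, but it takes a genuinely different route from the paper. The paper's own proof is a one-line specialization: set $k_1=1$, $k_2=a$ in Theorem \ref{two} to obtain an $OD\big(2n;\ 1,d\big)$ for all $n\geq N$, and then use the equivalence between the existence of an $OD(n;\ 1,h)$ and a skew-symmetric $W(n,h)$. You instead start from the plain asymptotic existence of $W(n,d)$ for squares $d$ (Theorem \ref{sym}, or already Theorem \ref{sym1}) and apply the block embedding $A\mapsto \left[\begin{smallmatrix} 0 & A\\ -A^{\rm T} & 0\end{smallmatrix}\right]$, which indeed sends any $W(n,d)$ to a skew-symmetric $W(2n,d)$; your verification of both the skew-symmetry and the identity $WW^{\rm T}=dI_{2n}$ is correct, and the conclusion follows immediately. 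What your approach buys is economy: it bypasses the two-variable OD machinery of Theorem \ref{two} (the $2\times 2$ array built from circulants and Corollary \ref{cw}) entirely, and it isolates the fact that the only role of the squareness hypothesis is to guarantee a supply of $W(n,d)$ for all large $n$ --- the skew-symmetrization step itself needs no hypothesis on $d$. What the paper's route buys is the slightly stronger object $OD(2n;\ 1,d)$ directly (equivalent here, as noted in the proof of Corollary \ref{4n2}), with a threshold controlled by the seeds of Theorem \ref{two}; the two methods will in general give different explicit bounds $N(d)$. Your secondary variant via two skew-symmetric seeds and Lemma \ref{twood} is also sound: the direct-sum/Kronecker construction $I_a\otimes A_1\oplus I_b\otimes B_1$ in that lemma manifestly preserves skew-symmetry by the same transpose computation used there for symmetry, so your extension of the lemma is legitimate, though it is not needed once the first argument is in place.
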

\begin{proof}
 Suppose that $d=a^2.$ Let $k_1=1$ and $k_2=a.$ By Theorem \ref{two}, there exists an integer $N=N(d)$ such that for each $n\geq N$, there is an $OD\big(2n;\ 1,d\big)$,
 and so a skew-symmetric $W(2n, d).$ \end{proof}
We now use a different method to show Theorem \ref{four} shown by Eades \cite[Theorem 7.17]{Eades, GS} to improve the bounds ($N$) for the asymptotic existence of ODs of order $4n$, and consequently we prove Corollaries \ref{4n1}, \ref{4n2} and \ref{4n3}.
\begin{theorem}\label{four}
 Suppose that $k=k_1^2+k_2^2+k_3^2+k_4^2,$ where $k_1, k_2, k_3$ and $k_4$ are nonzero integers. Then there is an integer $N=N(k)$ such that for each $n\geq N$, there is an $OD\big(4n; \ k_1^2, k_2^2, k_3^2, k_4^2\big).$ 
\end{theorem}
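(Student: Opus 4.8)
The plan is to follow the same template used in the proofs of Theorems \ref{sym} and \ref{two}: build a ``base'' orthogonal design on an \emph{odd} order using circulant weighing matrices glued together by a Kronecker product, then combine it with a design of the same type whose order is a power of $2$, and finally invoke Lemma \ref{twood} to bootstrap the two into designs of all sufficiently large orders of the form $4n$.

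First I would prepare the odd-order piece. For each $j=1,2,3,4$, write $k_j^2=\prod_{i=1}^m q_{ij}^2$, where each $q_{ij}$ is either $1$ or a prime power, padding with $1$'s so that all four products have the same length $m$. For each index $i$ set $b_i=\mathrm{lcm}\{q_{i1}^2+q_{i1}+1,\,q_{i2}^2+q_{i2}+1,\,q_{i3}^2+q_{i3}+1,\,q_{i4}^2+q_{i4}+1\}$; by Corollary \ref{cw} there is a circulant $W_{ij}=W(b_i,q_{ij}^2)$ for every $i,j$. Taking Kronecker products $A_j=\otimes_{i=1}^m W_{ij}$ (one may also insert back-diagonal factors $R_i$ to control transposes, exactly as in Theorem \ref{two}) yields four circulant weighing matrices $A_1,A_2,A_3,A_4$ of the common order $q=\prod_{i=1}^m b_i$, each of weight $k_j^2$. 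Since every $b_i$ is odd, $q$ is odd.

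Next I would assemble these four commuting circulants into an $OD(4q;\,k_1^2,k_2^2,k_3^2,k_4^2)$. The natural device is a $4\times 4$ array of the type used for quaternion-style designs: place $A_1,A_2,A_3,A_4$ (with appropriate signs and transposes, and possibly $R$-twists on the diagonal blocks to restore the pairwise anti-commuting relations of Lemma \ref{OD}) into a pattern such as
$$
\begin{bmatrix}
A_1 & A_2 & A_3 & A_4\\
-A_2 & A_1 & -A_4 & A_3\\
-A_3 & A_4 & A_1 & -A_2\\
-A_4 & -A_3 & A_2 & A_1
\end{bmatrix}.
$$
Because the $A_j$ are circulant they commute and their own transposes behave predictably, so one checks directly that the off-diagonal inner products cancel and the diagonal ones sum to $\big(\sum_j k_j^2 x_j^2\big)I$, giving the desired $OD$ on the odd order $4q$.

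Finally I would supply a power-of-two companion and merge. By Lemma \ref{e} there is an integer $d$ with a (skew-symmetric) $OD(2^d;\,k_1^2,k_2^2,k_3^2,k_4^2)$, of order $4\cdot 2^{d-2}$. Writing both orders as $4q$ and $4\cdot 2^{d-2}$, their gcd is $4h$ with $h=\gcd(q,2^{d-2})=1$ since $q$ is odd; hence $\gcd(4q,2^d)=4$, and Lemma \ref{twood} produces an integer $N=N(k)$ so that for every $n\ge N$ there is an $OD(4n;\,k_1^2,k_2^2,k_3^2,k_4^2)$. I expect the one genuinely delicate step to be the second one: choosing the signs, transposes, and back-diagonal twists in the $4\times 4$ block array so that all six anti-commutation conditions $A_iA_j^{\mathrm T}=-A_jA_i^{\mathrm T}$ of Lemma \ref{OD} hold simultaneously while the blocks remain pairwise disjoint. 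Everything else (the number-theoretic bookkeeping and the final gcd computation) is routine once this combinatorial pattern is pinned down.
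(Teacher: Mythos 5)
Your proposal follows the paper's proof essentially step for step: the factorizations $k_j^2=\prod_i q_{ij}^2$, the least common multiples $b_i$, the circulant weighing matrices from Corollary \ref{cw}, their Kronecker products of odd order $q=\prod_i b_i$, a $4\times 4$ plugging array, the power-of-two companion from Lemma \ref{e}, the computation $\gcd(4q,2^d)=4$, and the final appeal to Lemma \ref{twood}. The one step you leave open --- and correctly flag as the delicate one --- is the choice of array, and the candidate you actually display does not work as written. It is the Williamson array, whose off-diagonal block products cancel only when the blocks are pairwise amicable, i.e.\ $A_iA_j^{\rm T}=A_jA_i^{\rm T}$ for $i\neq j$; for circulant (or Kronecker products of circulant) matrices, $A_iA_j^{\rm T}$ and $A_jA_i^{\rm T}$ are merely transposes of one another and need not coincide, so the required six amicability conditions are not automatic. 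The paper's resolution is exactly the standard device you allude to: it plugs the blocks into the Goethals--Seidel array, in which $B$, $C$, $D$ appear alongside their transposes in a pattern arranged so that the cancellations hold for arbitrary circulant-type blocks, and the first block is made symmetric by tensoring each $W_{i1}$ with the back-diagonal matrix $R_i$ (so $A=\otimes_{i}W_{i1}R_i$ is a Kronecker product of back-circulants). With that array substituted for the one you wrote, the remainder of your argument, including the gcd bookkeeping and the use of Lemma \ref{twood}, is precisely the paper's.
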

\begin{proof}
Assume that $k=k_1^2+k_2^2+k_3^2+k_4^2$ and $k_1, k_2, k_3$ and $k_4$ are nonzero integers. Let $k_j^2=\prod_{i=1}^mq_{ij}^2,$ where $q_{ij}$ is either 1 or a prime power.
For each $i$, $1\le i\le m$, let $b_i={\rm lcm}\big\{q_{ij}^2+q_{ij}+1; \ \ j=1,2,3,4\big\}.$ From Corollary \ref{cw}, 
for each $j$, $1\le j\le 4$, and each $i$, $1\le i\le m$, there exists a circulant $W_{ij}=W(b_i, q_{ij}^2).$ Putting $$A=\displaystyle\otimes_{i=1}^{m}W_{i1}R_i,\ \ B=\otimes_{i=1}^{m}W_{i2},
 \ \ C=\otimes_{i=1}^{m}W_{i3}, \ \ D=\otimes_{i=1}^{m}W_{i4},$$ in the following array (Goethals-Seidel \cite{Go}) gives 
an  $OD\big(4q; \ k_1^2, k_2^2, k_3^2, k_4^2\big)$, 
$$\left[ \begin {array}{rrrr} xA&yB&zC&uD \\ -yB&xA&uD^{\rm T}&-zC^{\rm T} \\ -zC&-uD^{\rm T}&xA&yB^{\rm T} \\ -uD&zC^{\rm T}&-yB^{\rm T}&xA\end {array} \right]\!,$$
where  $q=\prod_{i=1}^m b_i$ which is an odd number, and $R_i$ is the back-diagonal matrix of order $b_i$. 

By Lemma \ref{e}, there is an $OD\big(2^d; \ k_1^2, k_2^2, k_3^2, k_4^2\big)$ for some suitable integer $d\ge 2$.
Since for $d\ge 2$, ${\rm gcd}(4q, 2^d)=4,$ Lemma \ref{twood} implies that there is an integer $N=N(k)$ such that for each $n\geq N$, there is an $OD\big(4n;\ k_1^2, k_2^2, k_3^2, k_4^2\big).$ Note that if some of the $k_i$'s are zero, then we consider the zero matrices.
\end{proof}

\begin{corollary}\label{4n1}
 Suppose that $d$ is any positive integer. Then there is an integer $N=N(d)$ such that for each $n\geq N$, there is a $W(4n,d).$
\end{corollary}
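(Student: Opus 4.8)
The plan is to combine Lagrange's four-square theorem with Theorem \ref{four}, since the latter is precisely engineered to produce ODs of order $4n$ whose type is a quadruple of squares. First I would recall the classical fact that every positive integer $d$ can be written as a sum of four integer squares, say $d=k_1^2+k_2^2+k_3^2+k_4^2$ for some nonnegative integers $k_1,k_2,k_3,k_4$. This is the step that guarantees the input to Theorem \ref{four} is always available, no matter which $d$ we start with; there is no divisibility or congruence restriction to check, which is exactly why the four-square decomposition (rather than the three-square criterion of Lemma \ref{Gauss}) is the right tool here.

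Next I would feed this decomposition into Theorem \ref{four}. Although that theorem is stated for nonzero $k_i$, its proof explicitly notes that zero $k_i$'s are handled by substituting the corresponding zero matrices, so the conclusion applies verbatim to any four-square representation of $d$. Thus there is an integer $N=N(d)$ such that for each $n\geq N$ there exists an $OD\big(4n;\ k_1^2,k_2^2,k_3^2,k_4^2\big)$. I would then invoke the basic observation from the Introduction that equating every variable to $1$ in an OD of type $(s_1,\ldots,s_\ell)$ yields a weighing matrix of weight $\sum_{j} s_j$. Applying this to our OD produces a $W\big(4n,\ k_1^2+k_2^2+k_3^2+k_4^2\big)=W(4n,d)$ for every $n\geq N$, which is exactly the desired statement, and it inherits the same threshold $N=N(d)$ furnished by Theorem \ref{four}.

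There is essentially no hard analytic core to this argument: once Theorem \ref{four} and Lagrange's theorem are in hand, the result is a direct specialization followed by the variable-to-$1$ collapse. The only points requiring a moment's care are (i) correctly citing or stating Lagrange's four-square theorem as the source of the decomposition, and (ii) justifying that the zero-square case of Theorem \ref{four} is legitimate, which I would do by pointing to the concluding remark in that theorem's proof. I would therefore expect the write-up to be short, with these two bookkeeping remarks constituting the main (minor) obstacle rather than any substantive construction.
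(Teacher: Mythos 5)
Your proposal is correct and follows exactly the paper's own argument: decompose $d$ into four integer squares via Lagrange's theorem, apply Theorem \ref{four} (noting, as its proof does, that zero $k_i$'s are handled by zero matrices), and set all variables to $1$ in the resulting $OD\big(4n;\ k_1^2,k_2^2,k_3^2,k_4^2\big)$ to obtain a $W(4n,d)$. No differences worth noting.
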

\begin{proof}
 It is a well known theorem of Lagrange \cite{element} that every positive integer can be written in the sum of four integer squares.  Let
$d=k_1^2+k_2^2+k_3^2+k_4^2. $ From Theorem \ref{four}, there is an integer $N=N(k)$ such that for each $n\geq N$, there is an $OD\big(4n;\ k_1^2, k_2^2, k_3^2, k_4^2\big)$, and
therefore a $W(4n,d).$
\end{proof}

\begin{corollary}\label{4n2}
 Suppose that $d$ is the sum of three integer squares. Then there exists an integer $N=N(d)$ such that for each $n\geq N$, there is a skew-symmetric $W(4n,d).$
\end{corollary}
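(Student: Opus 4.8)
The plan is to reduce the problem to the construction of an $OD(4n;\ 1,d)$ and then extract a skew-symmetric weighing matrix from it, exactly in the spirit of the proof of Corollary \ref{2n11}. Since $d$ is a sum of three integer squares, I would first write $d=a^2+b^2+c^2$ with $a,b,c$ nonnegative integers. Setting $(k_1,k_2,k_3,k_4)=(1,a,b,c)$, Theorem \ref{four} then supplies an integer $N=N(d)$ such that for every $n\geq N$ there is an $OD(4n;\ 1,a^2,b^2,c^2)$. Here $k_1=1$ is always nonzero, and any of $a,b,c$ that happen to vanish are absorbed by the zero-matrix remark at the end of the proof of Theorem \ref{four}, so the hypothesis of that theorem is met.

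Next I would collapse the four weights down to two. By Lemma \ref{OD} the $OD(4n;\ 1,a^2,b^2,c^2)$ corresponds to pairwise disjoint $\{0,\pm1\}$-matrices $A_1,A_2,A_3,A_4$ with $A_1$ a $W(4n,1)$ and $A_iA_j^{\rm T}=-A_jA_i^{\rm T}$ for $i\neq j$. Equating the three variables carrying the weights $a^2,b^2,c^2$ replaces $A_2,A_3,A_4$ by their sum $M:=A_2+A_3+A_4$; since these three are pairwise disjoint, $M$ is again a $\{0,\pm1\}$-matrix and is disjoint from $A_1$, and because the off-diagonal cross terms cancel one gets $MM^{\rm T}=(a^2+b^2+c^2)I_{4n}=dI_{4n}$. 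Summing the relations $A_1A_j^{\rm T}=-A_jA_1^{\rm T}$ over $j=2,3,4$ also yields $A_1M^{\rm T}=-MA_1^{\rm T}$, so $\{A_1,M\}$ realizes an $OD(4n;\ 1,d)$.

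Finally I would normalize by the weight-one matrix, precisely as in the first paragraph of the proof of Lemma \ref{rat}. Since $A_1$ is a $W(4n,1)$ it is a signed permutation matrix with $A_1^{\rm T}A_1=I_{4n}$, so $D:=A_1^{\rm T}M$ is obtained from $M$ by permuting rows and flipping signs and is therefore still a $W(4n,d)$ (indeed $DD^{\rm T}=A_1^{\rm T}MM^{\rm T}A_1=dI_{4n}$). Left- and right-multiplying $A_1M^{\rm T}=-MA_1^{\rm T}$ by $A_1^{\rm T}$ and $A_1$ gives $M^{\rm T}A_1=-A_1^{\rm T}M$, that is $D^{\rm T}=-D$. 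Thus $D$ is the desired skew-symmetric $W(4n,d)$ for each $n\geq N$.

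The only genuinely delicate point is this last normalization step: one must check that passing from $\{A_1,M\}$ to $\{I_{4n},D\}$ preserves the weight $d$ and converts the anti-commutation relation into skew-symmetry. This is exactly the manipulation already used for order $4$ in the proof of Lemma \ref{rat}, so no new idea is required; the remaining ingredients (the three-square decomposition of $d$, the appeal to Theorem \ref{four}, and the variable-equating that merges three weights into one) are routine.
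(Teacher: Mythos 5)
Your proposal is correct and follows essentially the same route as the paper: decompose $d$ as a sum of three squares, invoke Theorem \ref{four} with one of the four parameters equal to $1$ to obtain an $OD(4n;\ 1,a^2,b^2,c^2)$, and pass to a skew-symmetric $W(4n,d)$ via the equivalence between an $OD(n;\ 1,h)$ and a skew-symmetric $W(n,h)$. The only difference is that the paper simply cites this equivalence as a known fact, whereas you spell out the variable-equating and the normalization $D=A_1^{\rm T}M$ explicitly, which is a correct (and more self-contained) rendering of the same argument.
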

\begin{proof}
 Consider $d=a^2+b^2+c^2,$ for some integers $a,b$ and $c.$ Substituting $k_1=a$, $k_2=b$, $k_3=c$ and $k_4=1$ in Theorem \ref{four}
gives the result. Note that the existence of an $OD(n;\ 1,h)$ is equivalent to existence of a skew-symmetric $W(n,h).$
\end{proof}
\begin{corollary}\label{4n3}
 Suppose that $d$ is any positive integer. Then there exists an integer $N=N(d)$ such that for each $n\geq N$, there is a skew-symmetric $W(8n,d).$
\end{corollary}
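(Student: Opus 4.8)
The plan is to reduce everything to the already-established asymptotic existence of (not necessarily symmetric) weighing matrices of weight $d$ and order $4n$, and then apply a single doubling step that simultaneously raises the order to $8n$ and forces skew-symmetry. The conceptual point worth flagging up front is that, by Theorem \ref{skewnon}, a skew-symmetric $W(4n,d)$ can exist only when $d$ is a sum of three integer squares; so for a general positive integer $d$ (e.g. $d\equiv 7\!\pmod 8$) one cannot hope to stay in order $4n$, and passing to order $8n$ is precisely what removes this obstruction.

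First I would invoke Corollary \ref{4n1}: since $d$ is a positive integer, there is an integer $N=N(d)$ such that for every $n\ge N$ there exists a $W(4n,d)$; call it $M$. Thus $M$ is a $\{0,\pm1\}$-matrix of order $4n$ with $MM^{\rm T}=dI_{4n}$, and hence also $M^{\rm T}M=dI_{4n}$ since $M$ is square (a right inverse of a square matrix is a two-sided inverse).

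The key construction is the block doubling
\[
D=\begin{bmatrix} 0 & M \\ -M^{\rm T} & 0 \end{bmatrix}.
\]
I would then verify directly that $D$ is a skew-symmetric $W(8n,d)$. Indeed, $D$ has entries in $\{0,\pm1\}$ and order $8n$; it is skew-symmetric because
\[
D^{\rm T}=\begin{bmatrix} 0 & -M \\ M^{\rm T} & 0 \end{bmatrix}=-D;
\]
and it has the correct weight because
\[
DD^{\rm T}=\begin{bmatrix} MM^{\rm T} & 0 \\ 0 & M^{\rm T}M \end{bmatrix}=dI_{8n}.
\]
Combining, for each $n\ge N(d)$ the matrix $D$ is the desired skew-symmetric $W(8n,d)$, proving the corollary with the same bound $N(d)$ supplied by Corollary \ref{4n1}.

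The only real content is the observation that a doubling of this shape turns an \emph{arbitrary} $W(m,d)$ into a skew-symmetric $W(2m,d)$; once this is in hand the result is immediate. The main thing to get right is therefore the verification that the doubling at once preserves the weight $d$, keeps entries in $\{0,\pm1\}$, and yields $D^{\rm T}=-D$ — all of which follow cleanly from $MM^{\rm T}=M^{\rm T}M=dI$ and the block structure. There is no genuine computational difficulty; the main obstacle is conceptual, namely recognizing (via Theorem \ref{skewnon}) that one must double the order to $8n$ rather than attempt a skew-symmetric $W(4n,d)$ directly, and seeing that this doubling is exactly the device that furnishes the weight-$1$ skew generator needed for a general $d$.
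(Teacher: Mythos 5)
Your proof is correct, and it takes a genuinely different and more elementary route than the paper's. The paper proves this corollary by constructing a five-variable $OD\big(8q;\ 1,k_1^2,k_2^2,k_3^2,k_4^2\big)$ with $q$ odd (a doubled Goethals--Seidel-type array bordered by $wI_q$ blocks), invoking Corollary \ref{g} for an $OD\big(2^{d};\ 1,k_1^2,k_2^2,k_3^2,k_4^2\big)$, and then gluing the two via Lemma \ref{twood} with $\gcd=8$; the skew-symmetric $W(8n,d)$ is obtained at the end by specializing the variables. You instead take the plain $W(4n,d)$ already guaranteed by Corollary \ref{4n1} and apply the symplectic doubling $D=\left[\begin{smallmatrix}0&M\\-M^{\rm T}&0\end{smallmatrix}\right]$, whose verification ($D^{\rm T}=-D$, $DD^{\rm T}=-D^2=dI_{8n}$, using $MM^{\rm T}=M^{\rm T}M=dI_{4n}$) is immediate. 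Your argument proves the stated corollary cleanly and with less machinery, and it makes transparent why passing from $4n$ to $8n$ removes the obstruction of Theorem \ref{skewnon}. What the paper's longer route buys is (i) the stronger intermediate object, a full five-variable OD of order $8n$, which is of independent interest, and (ii) control over the explicit bound $N(d)$ — the paper's stated aim is to improve these asymptotic bounds, and its direct construction at order $8q$ with $\gcd=8$ in Lemma \ref{twood} yields its own value of $N$, whereas yours simply inherits the $N$ of Corollary \ref{4n1}. As a proof of the corollary as literally stated, however, your version is complete and arguably preferable.
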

\begin{proof}
By Lagrange's  theorem \cite{vin}, one can write $d=k_1^2+k_2^2+k_3^2+k_4^2, $ where $k_i$'s are nonnegative integers. Let $A, B, C$ and $D$ be the same matrices as in Theorem \ref{four}. It can be seen that 
the following matrix gives an $OD\big(8q; \ 1, k_1^2, k_2^2, k_3^2, k_4^2\big),$ where $q$ is obtained as in Theorem \ref{four}, and is an odd number:
$$\left[ \begin {array}{cccc|cccc} xA&yB&zC&uD&wI_q&0&0&0 \\ -yB&xA&uD^{\rm T}&-zC^{\rm T}&0&wI_q&0&0 \\ -zC&-uD^{\rm T}&xA&yB^{\rm T}&0&0&wI_q&0 \\ -uD&zC^{\rm T}&-yB^{\rm T}&xA&0&0&0&wI_q \vspace{.1cm}
 \\ \hline \vspace{-.4cm}\\ wI_q&0&0&0&-xA&yB^{\rm T}&zC^{\rm T}&uD^{\rm T} \\ 0&wI_q&0&0&-yB^{\rm T}&-xA&uD&-zC \\ 0&0&wI_q&0&-zC^{\rm T}&-uD&-xA&yB \\ 0&0&0&wI_q&-uD^{\rm T}&zC&-yB&-xA\end {array} \right]\!.$$
From Corollary \ref{g}, there is an $OD\big(2^d;\ 1, k_1^2, k_2^2, k_3^2, k_4^2\big)$ for some suitable integer $d\ge 3.$
Since for $d\ge 3$, ${\rm gcd}(8q, 2^d)=8,$ Lemma \ref{twood} implies that there is an integer $N=N(d)$ such that for any $n\geq N$, there is an $OD\big(8n; \ 1, k_1^2, k_2^2, k_3^2, k_4^2\big),$ and so
a skew-symmetric $W(8n,d).$

\end{proof}

\begin{example}{\rm
Suppose that $k=92.$ Let $k_1=2, \ k_2=4, \ k_3=6$ and $ k_4=6$ in Theorem \ref{four}. Also, let 
$q_{11}=2, \ q_{21}=1, \ q_{12}=4, \ q_{22}=1, \ q_{13}=2, \ q_{23}=3, \ q_{14}=2$ and $ q_{24}=3.$ Then $b_1={\rm LCM}\{7,21,7,7\}=21,$
and $b_2={\rm LCM}\{3,3,13,13\}=39.$  By Theorem \ref{four}, there is an $$OD\big(4\cdot 21\cdot 39; \ 2^2,4^2,6^2,6^2\big).$$
From  Lemma \ref{e}, there is an $OD\big(2^{13}; \ 2^2,4^2,6^2,6^2\big)$. By Lemma \ref{twood}, since $h=\gcd(4\cdot 21\cdot 39, 2^{13})=4$, we have $N(92)\le 2^{11}\cdot 3^2\cdot 7\cdot 13$, and so for each
$n\geq N(92)$, there are a $W(4n,92)$ and a skew-symmetric $W(8n,92).$}
\end{example}

\section{Acknowledgement}
The paper constitutes Chapter $3$ of the author's Ph.D. thesis written under the direction of Professor Hadi Kharaghani at the University of Lethbridge. 
The author would like to thank Professor Hadi Kharaghani for introducing the problem and his very useful guidance toward solving the problem and also Professor Rob Craigen for his time and great help.

\end{document}